\def\pt{\partial_t}
\def\H {{\mathcal H}}
\def\C {{\mathcal C}}
\def\H {{\mathcal H}}
\def\M {{\mathcal M}}
\def\B {{\mathcal B}}
\def\Q {{\mathcal Q}}
\def\A {{\mathcal A}}
\def\R {\mathbb{R}}
\def\N {\mathbb{N}}
\def\D {{\mathfrak D}}
\def\HH{{\rm H}}
\def\eps{\varepsilon}
\def\e{{\rm e}}
\def\oo{{\nu}}
\def\d{{\rm d}}
\def\ddt{\frac{\d}{\d t}}
\def \l {\langle}
\def \r {\rangle}
\def \and {{\qquad\text{and}\qquad}}
\def\cbt {{\sc cbt}}
\newtheorem{proposition}{Proposition}[section]
\newtheorem{theorem}[proposition]{Theorem}
\newtheorem{corollary}[proposition]{Corollary}
\newtheorem{lemma}[proposition]{Lemma}
\theoremstyle{definition}
\newtheorem{definition}[proposition]{Definition}
\newtheorem{remark}[proposition]{Remark}
\numberwithin{equation}{section}
\def \au {\rm}
\def \ti {\it}
\def \jou {\rm}
\def \bk {\it}
\def \no#1#2#3 {{\bf #1} (#3), #2.}
\def \eds#1#2#3 {#1, #2, #3.}
\title[Viscoelasticity with singularly oscillating external forces]
{Averaging of equations of viscoelasticity\\
with singularly oscillating external forces}
\author[V.V. Chepyzhov, M. Conti and V. Pata]
{Vladimir V. Chepyzhov, Monica Conti and Vittorino Pata}
\address{Institute for Information Transmission Problems, RAS
\newline\indent
Bolshoy Karetniy 19, Moscow 101447, Russia
\newline\indent
National Research University Higher School of Economics
\newline\indent
Myasnitskaya Street 20, Moscow 101000, Russia}
\email{chep@iitp.ru}
\address{Politecnico di Milano - Dipartimento di Matematica
\newline\indent
Via Bonardi 9, 20133 Milano, Italy}
\email{monica.conti@polimi.it}
\email{vittorino.pata@polimi.it}
\subjclass[2000]{34K33, 35B40, 45K05, 74D99}
\keywords{Equations of viscoelasticity, singularly oscillating external forces,
dynamical process, uniform global attractors}
\thanks{The first author VVC has been partially supported by the {\it Russian Foundation of Basic Researches} (project 14-01-0034).}
\begin{document}

%%%%%%%%%%%%%%%%%%%%%%%%%%%%%%%%%%%%%%%%%%%%
\begin{abstract}
Given $\rho\in[0,1]$, we
consider for $\eps\in(0,1]$ the nonautonomous viscoelastic
equation with a singularly oscillating external force
$$
\partial_{tt} u-\kappa(0)\Delta u - \int_0^\infty
\kappa'(s)\Delta u(t-s)\d s +f(u)=g_{0}(t)+\eps ^{-\rho }g_{1}(t/\eps )
$$
together with the {\it averaged} equation
$$
\partial_{tt} u-\kappa(0)\Delta u - \int_0^\infty
\kappa'(s)\Delta u(t-s)\d s +f(u)=g_{0}(t).
$$
Under suitable assumptions on the nonlinearity and on the external force,
the related solution processes $S_\eps(t,\tau)$ acting on the
natural weak energy space $\H$
are shown to possess uniform attractors $\A^\eps$.
Within the further assumption $\rho<1$, the family
$\A^\eps$ turns out to be bounded in $\H$, uniformly with respect to $\eps\in[0,1]$.
The convergence of the attractors $\A^\eps$
to the attractor $\A^0$ of the averaged equation
as $\eps\to 0$ is also established.
\end{abstract}
%%%%%%%%%%%%%%%%%%%%%%%%%%%%%%%%%%%%%%%%%%%%%%%%%

\maketitle

%%%%%%%%%%%%%%%%%%%%%%%%%%%%%%%%%%%%%%%%%%%%
\section{Introduction}
\label{intro}

\noindent
Let $\Omega\subset\R^3$ be a bounded domain with smooth boundary $\partial\Omega$,
and let $\rho\in[0,1]$ be a fixed parameter.
For every $\eps\in[0,1]$ and any given $\tau\in\R$,  we consider for $t>\tau$
the hyperbolic equation with memory,
arising in the theory of isothermal viscoelasticity~\cite{FM,RHN},
in the unknown
$u=u(x, t): \Omega \times \R \to \R$
\begin{equation}
\label{BASE}
\partial_{tt} u
-\kappa(0)\Delta u - \int_0^\infty
\kappa'(s)\Delta u(t-s)\d s +f(u)=g^{\eps}(t),
\end{equation}
where
$$g^{\eps}(x,t)=
\begin{cases}
g_{0}(x,t)+\eps^{-\rho }g_{1}(x,t/\eps) & \text{if } \eps>0,\\
g_{0}(x,t) & \text{if } \eps=0.
\end{cases}
$$
The equation is supplemented with the Dirichlet boundary
condition
\begin{equation}
\label{bd-cond-u}
u(x,t)_{|x\in\partial\Omega}=0.
\end{equation}
The variable $u$, describing
the displacement field relative to the reference
configuration of a viscoelastic body occupying the volume
$\Omega$ at rest, is interpreted as an initial datum for $t\leq\tau$, namely,
\begin{equation}
\label{in-cond-u}
\begin{cases}
u(\tau) = u_\tau,\\
\pt u(\tau) = v_\tau,\\
u(\tau-s) = q_\tau(s),\quad s>0,
\end{cases}
\end{equation}
where $u_\tau$, $v_\tau$ and the function $q_\tau$ are assigned data.
The function $\kappa$, usually called memory kernel,
is supposed to be convex, decreasing and such that
$$\kappa(0)>\kappa(\infty)>0.$$
Without loss of generality, we will assume hereafter $\kappa(\infty)=1$.
Notably, in the present model, the dissipation mechanism
is entirely contained in the memory term, which provides a very weak
form of damping, whereas no instantaneous friction is active.
The term $f:\R\to\R$ is a nonlinear function of the displacement
having subcubic growth, and complying with rather standard dissipativity conditions.
Physically relevant examples of functions in this class
are
$$f(u)=a |u|^{p-1}u,\quad a>0,\,p\in[1,3),$$
appearing in the equation of relativistic quantum mechanics,
and
$$f(u)=b\sin u,\quad b>0,$$
yielding a sine-Gordon model describing a
Josephson junction driven by a current source (see e.g.\
\cite{HAL,TEMbook} and references therein).
Finally,
$g^{\eps}(t)$
represents a singularly (if $\rho>0$) oscillating external force.

\smallskip
The aim of the present paper is to study the asymptotic properties of~\eqref{BASE}-\eqref{in-cond-u} depending on the
parameter $\eps$, which represents the
(time) oscillation rate in the external force, whose
amplitude is of order $\eps ^{-\rho }$. To this end,
following a pioneering idea of C.M.\ Dafermos~\cite{DAF},
we first translate the initial-boundary value problem above in the so-called
past history framework.
Accordingly, we introduce for $t \geq \tau$ the past history variable
$$
\eta^t(s) = u(t) - u(t-s), \quad s >0.
$$
Defining the (positive and summable) kernel
$$\mu(s)=-\kappa'(s),$$
where the {\it prime} stands for derivative with respect to $s$,
equation~\eqref{BASE} is rewritten
as the system of equations for $t>\tau$
\begin{equation}
\label{MEDIUM}
\begin{cases}
\partial_{tt} u-\Delta u
\displaystyle-\int_0^\infty \mu(s) \Delta\eta(s)\d s+f(u)= g^{\eps}(t),\\
\pt \eta=-\partial_s\eta+\pt u,
\end{cases}
\end{equation}
in the unknown variables $u=u(t)$ and $\eta=\eta^t(\cdot)$,
subject to the boundary conditions
\begin{equation}
\label{MEDIUMBCaa}
u(t)_{|\partial\Omega}=0
\and
\eta^t_{|\partial\Omega}=0,
\end{equation}
with $\eta$ complying with the further constraint
\begin{equation}
\label{MEDIUMBC}
\eta^t(0)=\lim_{s\to 0}\eta^t(s)=0.
\end{equation}
In turn,
the initial conditions become
\begin{equation}
\label{MEDIUMCI}
\begin{cases}
u(\tau) = u_\tau, \\
\pt u(\tau) = v_\tau, \\
\eta^\tau = \eta_\tau,
\end{cases}
\end{equation}
having set
$$\eta_\tau(s)=u_\tau-q_\tau(s).$$
The advantage of the new formulation is that
the nonautonomous problem~\eqref{MEDIUM}-\eqref{MEDIUMCI}
generates, at any fixed $\eps$, a dynamical process $S_\eps(t,\tau)$
acting on a suitable phase space $\H$.
According to the well-established theory of nonautonomous dynamical systems~\cite{CVbook,HARbook}, the longterm
dynamics can be conveniently described in terms of (uniform) global attractors
$\A^{\eps}$ of the corresponding processes.
Indeed, our main purpose is to investigate the properties of the family $\A^{\eps}$, in dependence
of the parameter $\eps\in[0,1]$. First, within the restriction $\rho<1$,
and under suitable translation-compactness assumptions on the external forces, we
prove the uniform (with respect to $\eps$) boundedness
of the global attractors, namely,
$$
\sup_{\eps\in[0,1]}\|\A^\eps\|_\H<\infty.
$$
This fact is not at all intuitive, since
in principle the blow up of the oscillation amplitude
might overcome the averaging effect due to the scaling
$t/\eps$ appearing in $g_1$. Next, we establish
a convergence result for $\A^{\eps}$ in the limit $\eps\to 0$. More precisely,
we show that
$$
\lim_{\eps\to 0}\,
\mathrm{dist}_{\H}\big(\A^{\eps },\A^{0}\big)=0,
$$
where
$\mathrm{dist}_{\H}$
denotes the standard Hausdorff semidistance
in $\H$.
This allows us to interpret the {\it averaged} case $\eps=0$ as the formal limit
of~\eqref{BASE} as $\eps\to 0$.

\smallskip
The averaging of global attractors of nonautonomous evolution equations
in presence of nonsingular time oscillations (i.e.\ when $\rho=0$) has been
studied by several authors. See e.g.\ \cite{ChGoVi,CVbook,CV5,CVW,EfZe1,EfZe2,FiVi1,FiVi2,I1,ChVi,Ze}.
The more challenging singular case $\rho>0$ is treated in the more recent papers~\cite{CV3,CV4,CPV-aver,CPV-NS,CV2}.
In particular, closely related to our work, in~\cite{CPV-aver,CV2}
the same kind of analysis is carried out for the weakly damped wave equation
$$\partial_{tt} u-\Delta u
+\pt u+f(u)= g^{\eps}(t),
$$
corresponding to \eqref{BASE}
with the instantaneous damping $\pt u$
in place of the memory term, expressed by the convolution integral.
Actually, the presence of the memory in the model introduces essential difficulties from the very beginning
of the asymptotic analysis, namely, at the level of absorbing sets.
Indeed, at any fixed $\eps$, the
existence of an absorbing set for the process $S_\eps(t,\tau)$ generated by the nonautonomous
equation of viscoelasticity has not been established before, and requires the
use of a novel Gronwall-type lemma with parameters from~\cite{Patonw}.
A second difficulty is to obtain the uniform boundedness of the attractors $\A^\eps$.
For the damped wave equation,
the main idea of~\cite{CPV-aver} was to decompose the solution, by introducing a linear problem in order to isolate
the oscillations in a suitable way. The same ingredient is needed here, but it is not enough,
and the desired conclusion follows from a quite delicate recursion argument.
Such a uniform boundedness is crucial to prove the convergence $\A^\eps\to\A^0$.

\begin{remark}
Setting $\bar\kappa(s)=\kappa(s)-1$, an integration by parts allows us to rewrite~\eqref{BASE}
in the form
$$
\partial_{tt} u
-\Delta u - \int_0^\infty
\bar\kappa(s)\Delta u_t(t-s)\d s +f(u)=g^{\eps}(t).
$$
Thus, in the limit case when $\bar\kappa$ converges to the Dirac mass at $0^+$,
we recover the so-called strongly damped wave equation
$$
\partial_{tt} u
-\Delta u - \Delta u_t+f(u)=g^{\eps}(t),
$$
for which the whole analysis of this work applies
(although working directly with such an equation is much easier).
\end{remark}

\subsection*{Plan of the paper}
In the next \S\ref{SNot}
and \S\ref{SGen}, we introduce the notation and the general assumptions.
The generation of an $\eps$-family
of processes $S_\eps(t,\tau)$, acting on a suitable phase space $\H$, is discussed in \S\ref{SDyn}.
In \S\ref{SDis}, we study the dissipativity properties of such a family, proving
the existence of bounded absorbing sets, while in \S\ref{SUni} we show that
$S_\eps(t,\tau)$ possesses the uniform global attractor $\A^\eps$,
for every fixed $\eps\in[0,1]$.
The subsequent \S\ref{SAux} is
devoted to an auxiliary linear viscoelastic equation  with  oscillating external force.
This will be the crucial tool used in \S\ref{SBdd}, where a uniform (with respect to $\eps$) bound for the attractors
$\A^\eps$ is established. The main result on the convergence $\A^\eps\to\A^0$
as $\eps\to 0$ is stated and proved
in the final \S\ref{SCon}.
%%%%%%%%%%%%%%%%%%%%%%%%%%%%%%%%%%%%%%%%%%%%

%%%%%%%%%%%%%%%%%%%%%%%%%%%%%%%%%%%%%%%%%%%%
\section{Notation}
\label{SNot}

\subsection*{General agreement}
Throughout the paper, the symbols $c>0$ and $\Q(\cdot)$
will stand for a {\it generic} constant and a {\it generic} increasing positive function,
both independent of $\eps$ and $\tau$, as well
of $g_{0},g_{1}$.

\smallskip
Introducing the Hilbert space of square summable functions
on $\Omega$
$$\HH=L^2(\Omega)$$
with inner product $\l \cdot,\cdot \r$ and norm
$\|\cdot\|$,
we consider the Laplace-Dirichlet operator on $\HH$
$$A=-\Delta\qquad\text{with domain}\qquad \D(A)=H^1_0(\Omega)\cap H^2(\Omega),$$
and we define for $\sigma\in\R$ the scale of compactly nested Hilbert spaces
$$\HH^\sigma=\D(A^{\sigma/2})$$
endowed with the standard inner products and norms
$$\langle u,v\rangle_\sigma=\langle A^{\sigma/2}u,A^{\sigma/2}v\rangle,\qquad
\|u\|_{\sigma }=\|A^{\sigma/2}u\|.$$
The index $\sigma$ will be always omitted whenever zero. In particular,
we have the equalities
$$\HH^{-1}=H^{-1}(\Omega),\quad \HH^{1}=H_{0}^{1}(\Omega),\quad \HH^{2}=\D(A).$$
The symbol $\l \cdot,\cdot \r$ will also be used for the duality product between $\HH^\sigma$ and its
dual $\HH^{-\sigma}$.
Then, we introduce the $L^2$-weighted spaces on $\R^+=(0,\infty)$
$$\M^\sigma=L^2_{\mu}(\R^+,\HH^{\sigma+1})$$
normed by
$$\|\eta\|_{\M^\sigma}=\bigg(\int_0^\infty \mu(s)\|\eta(s)\|_{\sigma+1}^2\d s\bigg)^\frac12,$$
along with
the infinitesimal generator of the
right-translation semigroup on $\M$
$$T=-\partial_s
\qquad\text{with domain}\qquad
\D(T)=\big\{\eta\in\M:\,
\partial_s\eta\in \M,\,\,\eta(0)=0\big\},$$
where $\partial_s$ is the distributional derivative
with respect to the internal variable $s$.
Finally, we define the {\it extended memory spaces} (again, $\sigma$ is omitted if zero)
$$\H^\sigma=\HH^{\sigma+1}\times \HH^\sigma\times \M^\sigma$$
with the Euclidean product norm
$$\|(u,v,\eta)\|_{\H^\sigma}^2=\|u\|^2_{\sigma+1}
+\|v\|^2_\sigma+\|\eta\|^2_{\M^\sigma}.
$$
In what follows,
for any
$U=(u,v,\eta)\in\H$, we agree to call
\begin{equation}
\label{phii}
\Phi(U)=\frac12\|U\|^2_{\H}+\|u\|_{L^{p+1}}^{p+1},
\end{equation}
where $p\in[1,3)$ is the growth order of $f$. This quantity
is finite due to the Sobolev embedding $L^{p+1}(\Omega)\subset \HH^1$.
Besides, for any bounded set $\B\subset \H$, we use the notation
$$\Phi(\B)=\sup_{U\in \B}\Phi(U).$$
%%%%%%%%%%%%%%%%%%%%%%%%%%%%%%%%%%%%%%%%%%%%

%%%%%%%%%%%%%%%%%%%%%%%%%%%%%%%%%%%%%%%%%%%%
\section{General Assumptions}
\label{SGen}

\subsection{Assumptions on the nonlinearity}
Let $f\in \C^1(\R)$, with $f(0)=0$,
satisfy for a fixed $p\in[1,3)$ the growth and the dissipation conditions
\begin{align}
\label{crescita}
&|f'(u)|\leq c(1+|u|^{p-1}),\\
\label{dissip-phi}
&f(u)u\geq d_0|u|^{p+1}-c,
\end{align}
for some $d_0>0$.
Defining for every $u\in\HH^1$
$${\mathcal F}(u)=\int_\Omega \bigg(\int_0^{u(x)} f(y)\d y \bigg)\d x,$$
it is readily seen from \eqref{crescita}-\eqref{dissip-phi} that there exists $d>0$ such that
\begin{align}
\label{CTRLF}
d\|u\|^{p+1}_{L^{p+1}}-c &\leq {\mathcal F}(u)\leq c\|u\|^{p+1}_{L^{p+1}}+c,\\
\label{CTRLf}
d\|u\|^{p+1}_{L^{p+1}}-c &\leq \l f(u),u\r.
\end{align}
Besides, the following inequality holds:
\begin{equation}
\label{utile}
\|f(u)\|_{L^{6/5}}\leq c+ c|{\mathcal F}(u)|^{\frac{p}{p+1}}.
\end{equation}
Indeed, from \eqref{crescita}, the H\"older inequality and \eqref{CTRLF},
$$
\|f(u)\|_{L^{6/5}}
\leq c+c\left[\int_\Omega|u(x)|^{\frac{6p}5}\d x\right]^{\frac56}
\leq c +c\|u\|_{L^{p+1}}^p\leq
c+c|{\mathcal F}(u)|^{\frac{p}{p+1}}.
$$

\begin{remark}
\label{rem-lip}
In the Lipschitz case, i.e.\ when \eqref{crescita} holds with $p=1$, instead of \eqref{dissip-phi} it
is sufficient to require the weaker dissipation condition
$$\liminf_{|u|\to\infty}\frac{f(u)}{u}>-\lambda_1,$$
where $\lambda_1>0$ is the first eigenvalue of $A$.
Indeed, on account of the Poincar\'e inequality, it is a standard matter to verify
that \eqref{CTRLF}-\eqref{CTRLf} continue to hold if we redefine
$f(u)$ as $f(u)+\lambda u$,
for a suitable $\lambda<\lambda_1$ sufficiently close to $\lambda_1$,
replacing the term $Au$ in the first equation
of~\eqref{MEDIUM} with $(A-\lambda)u$.
Observe that the powers $(A-\lambda)^{\sigma/2}$ generate
the same spaces $\HH^\sigma$ with equivalent norms.
\end{remark}

\subsection{Assumptions on the external force}
The functions $g_{0}$ and
$g_{1}$
are translation bounded in $L_{\rm loc}^2(\R;\HH)$, i.e.\
\begin{align}
\| g_{0}\|_{{\rm tb}}^2& :=\sup_{t\in \R
}\int_{t}^{t+1}\|g_{0}(y)\|^2\d y= M_{0},
\label{s1r5} \\
\| g_{1}\|_{{\rm tb}}^2& :=\sup_{t\in \R
}\int_{t}^{t+1}\|g_{1}(y)\|^2 \d y = M_{1},
\label{s1r6}
\end{align}%
for some $M_{0},M_{1}\geq 0$. A straightforward consequence of (\ref{s1r6}) is
$$
\int_{t}^{t+1}\|g_{1}(y/\eps )\|^2\d y=\eps
\int_{t/\eps }^{(t+1)/\eps }\|g_{1}(y)\|^2\d y\leq
\eps (1+1/\eps)M_{1}\leq 2M_{1},
$$
so that
$$
\| g_{1}(\cdot /\eps )\|_{{\rm tb}}^2
\leq 2M_{1},\quad \forall
\eps \in (0,1].
$$
Hence, for $\eps>0$,
$$
\| g^{\eps }\|_{{\rm tb}}^2\leq
2\| g_{0}\|_{{\rm tb}}+2\eps ^{-2\rho }\| g_{1}(\cdot
/\eps )\|_{{\rm tb}}\leq 2M_{0}+4M_{1}\eps ^{-2\rho }.
$$
As a result, if we set
\begin{equation}
\label{Q}
Q_\eps
=\begin{cases}
2M_{0}+4M_{1}\eps ^{-2\rho } & \text{if } \eps>0,\\
M_{0} & \text{if } \eps=0,
\end{cases}
\end{equation}
we learn that
\begin{equation}
\label{s1BOUND}
\| g^{\eps }\|_{{\rm tb}}^2
\leq Q_\eps,\quad\forall\eps\in [0,1],
\end{equation}
meaning that the norm $\| g^{\eps }\|_{{\rm tb}}$
can grow with a rate of order $\eps ^{-\rho }$ as $\eps \to 0$.

\subsection{Assumptions on the memory kernel.} The kernel $\mu(s)=-\kappa'(s)$ is supposed to be
nonnegative, absolutely continuous and summable on $\R^+$, of total mass
$$\kappa_0:=\int_0^\infty\mu(s)\d s\in (0,1).$$
Moreover, we assume the existence of $\delta>0$ such that
\begin{equation}
\label{K2}
\mu'(s)+\delta\mu(s)\leq 0
\end{equation}
for almost every $s\in\R^+$.
It is worth noting that $\mu$ can be (weakly) singular at the origin.
The typical example of a kernel in this class is
$$\mu(s)=C s^{-\alpha}\e^{-\delta s},\quad \alpha\in[0,1),$$
for any positive constant
$$C<\frac{\delta^{1-\alpha}}{\Gamma(1-\alpha)},$$
where $\Gamma$ is the Euler-Gamma function.
%%%%%%%%%%%%%%%%%%%%%%%%%%%%%%%%%%%%%%%%%%%

%%%%%%%%%%%%%%%%%%%%%%%%%%%%%%%%%%%%%%%%%%%%
\section{The Dynamical Processes}
\label{SDyn}

\noindent
As anticipated in the Introduction, the original problem \eqref{BASE}-\eqref{in-cond-u}
can be translated into the evolution system in the unknown variables
$u=u(t)$ and $\eta=\eta^t(\cdot)$
\begin{equation}
\label{PROBLEM}
\begin{cases}
\partial_{tt} u+Au
+\displaystyle\int_0^\infty \mu(s) A\eta(s)\d s+f(u)= g^\eps(t),\\
\pt \eta=T\eta+\pt u,\\
\noalign{\vskip1.5mm}
u(\tau)=u_\tau,\quad \pt u(\tau)=v_\tau,\quad \eta^\tau=\eta_\tau,
\end{cases}
\end{equation}
where the set of data
$$(u_\tau,v_\tau,\eta_\tau)\in\H$$
is assigned at an arbitrary
initial time $\tau \in \R$.
The equivalence between the two formulations
is discussed in~\cite{Terreni}.
Introducing the three-component vectors
$$U(t)=(u(t),\pt u(t),\eta^t)
\and
U_\tau=(u_\tau,v_\tau,\eta_\tau),
$$
we view \eqref{PROBLEM} as the semilinear
ODE in $\H$
\begin{equation}
\label{SYSABS}
\begin{cases}
\displaystyle
\ddt U(t)={\mathbb A} U(t)+{\mathbb F}_\eps (U(t),t),\\
\noalign{\vskip1.5mm}
U(\tau)=U_\tau,
\end{cases}
\end{equation}
where ${\mathbb A}$ is the linear operator on $\H$ acting on the vector $U=(u,v,\eta)$ as
$$
{\mathbb A} U=
\big(v,
- A\bigg[u +\int_0^\infty \mu(s) \eta(s)\d s\bigg],
T\eta + v
\big)
$$
with domain
$$
\D({\mathbb A}) = \bigg\{U\in\H :\,\,
v\in \HH^1,\,\,
u +\int_0^\infty \mu(s) \eta(s)\d s \in \HH,\,\,
\eta \in \D(T)\bigg\},
$$
while
$${\mathbb F}_\eps(U,t)=\big(0,g^\eps(t)-f(u),0\big).
$$ From the same paper~\cite{Terreni} (but see also~\cite{ViscoCP,GMPZ}),
it is well-known that for every fixed $\eps\in[0,1]$
and every $U_\tau\in\H$
the initial value
problem \eqref{SYSABS} has a unique solution
$$U\in \C([\tau,\infty),\H),$$
depending with continuity on the initial data.
Besides, the third component $\eta^t$
of the solution $U(t)$ fulfills
the explicit representation formula
$$
\eta^t(s)=
\begin{cases}
u(t)-u(t-s), & 0<s\le t-\tau,\\
\eta_\tau(s-t)+u(t)-u_\tau, & s>t-\tau.
\end{cases}
$$
Accordingly, for every fixed $\eps\in[0,1]$, the
map
$$S_{\eps}(t,\tau):\H\rightarrow \H,\quad t\geq \tau,$$
acting by the formula
$$
S_{\eps}(t,\tau)U_{\tau }=U(t),
$$
defines a dynamical process
on the natural weak energy space $\H$, characterized by the two properties
\begin{itemize}
\item[(i)] $S_\eps(\tau,\tau)$ is the identity map on $\H$ for every $\tau\in\R$.
\smallskip
\item[(ii)] $S_\eps(t,\tau)S_\eps(\tau,r)=S_\eps(t,r)$ for every $t\geq\tau\geq r$.
\smallskip
\end{itemize}
Moreover, the family $S_{\eps}(t,\tau)$ generated by problem~\eqref{SYSABS}
fulfills the joint continuity
\begin{itemize}
\item[(iii)] $(t,U_\tau)\mapsto S_{\eps}(t,\tau)U_\tau\in\C([\tau,\infty)\times\H,\H)$ for every $\tau\in\R$.
\end{itemize}
%%%%%%%%%%%%%%%%%%%%%%%%%%%%%%%%%%%%%%%%%%%%

%%%%%%%%%%%%%%%%%%%%%%%%%%%%%%%%%%%%%%%%%%%%
\section{Dissipativity}
\label{SDis}

\subsection{Uniform absorbing sets}
The first step towards the asymptotic analysis of the process $S_\eps(t,\tau)$ is
an {\it a priori} estimate on the solutions $U(t)=S_\eps(t,\tau)U_\tau$.
With $\Phi$ and $Q_\eps$ given by~\eqref{phii} and~\eqref{Q}, respectively,
the main result of this section reads as follows.

\begin{theorem}
\label{Th-EE}
For every fixed $\eps\in [0,1]$, every $t\geq\tau$ and every initial datum $U_{\tau }\in \H$,
we have the estimate
$$
\Phi(U(t))\leq \Q(\|U_\tau\|_{\H})\e^{-\omega (t-\tau )} +c\left(1+Q_\eps\right),
$$
where $\omega >0$ is a universal constant independent of $\eps$ and $\tau$.
\end{theorem}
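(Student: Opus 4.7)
The plan is to produce an auxiliary Lyapunov-type functional $\Lambda(t)$ equivalent (up to absolute constants) to $\Phi(U(t))$ satisfying a differential inequality of the form
$$
\ddt \Lambda(t) + \omega_0 \Lambda(t) \leq c\|g^\eps(t)\|^2 + c,
$$
and then to invoke the Gronwall-type lemma with parameters from~\cite{Patonw} (the novel ingredient mentioned in the Introduction), which turns the translation bound~\eqref{s1BOUND} into the uniform term $c(1+Q_\eps)$.

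First I would test the first equation of~\eqref{PROBLEM} against $\pt u$ in $\HH$ and the second against $\eta$ in $\M$. The memory cross-terms cancel, and the kernel condition~\eqref{K2} yields
$$
\ddt \Phi(U(t)) + \tfrac{\delta}{2}\|\eta^t\|_{\M}^2 \leq \l g^\eps(t), \pt u(t)\r.
$$
This controls only the memory component, so to gain dissipation on $\|u\|_1^2$ and $\|\pt u\|^2$ I would introduce the two auxiliary functionals
$$
\Psi_1(t) = \l \pt u(t), u(t)\r \and \Psi_2(t) = -\l \pt u(t), \int_0^\infty \mu(s)\eta^t(s)\d s \r.
$$
Differentiating $\Psi_1$ along the solution and using \eqref{CTRLf} produces the desired $-\|u\|_1^2 - d\|u\|_{L^{p+1}}^{p+1}$, at the price of a wrong-sign $+\|\pt u\|^2$. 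The functional $\Psi_2$ is where the viscoelastic trick bites: using the second equation of~\eqref{PROBLEM} and integrating by parts in $s$ (with $\eta^t(0)=0$ and~\eqref{K2}), one generates a dominant term $-\kappa_0\|\pt u\|^2$ that defeats the bad $\|\pt u\|^2$ contribution once the weights are small. The remaining cross-terms (memory convolutions, the nonlinearity handled via~\eqref{utile}, and the external force) are absorbed by Cauchy--Schwarz and Young.

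Setting $\Lambda(t) = \Phi(U(t)) + \nu_1 \Psi_1(t) + \nu_2 \Psi_2(t)$ with $\nu_1,\nu_2$ small enough, I would verify the equivalence $\tfrac12 \Phi - c \leq \Lambda \leq 2\Phi + c$ and the closed inequality displayed above. The Gronwall-type lemma from~\cite{Patonw}, tailored exactly to right-hand sides that are only translation bounded, then delivers
$$
\Lambda(t) \leq \Lambda(\tau)\e^{-\omega(t-\tau)} + c(1+Q_\eps),
$$
whence the claim follows by reverting to $\Phi$ (with $\Q(\cdot)$ absorbing the polynomial bound of $\Phi(U_\tau)$ in $\|U_\tau\|_{\H}$, via the embedding $L^{p+1}(\Omega)\subset\HH^1$).

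The main obstacle is extracting the dissipative $-\|\pt u\|^2$: the memory provides only a weak ``parabolic'' dissipation in $\eta$ and never touches $\pt u$ directly, so the delicate multiplier $\Psi_2$ and the integration by parts against $\mu$ cannot be avoided, which in turn forces the weights $\nu_1,\nu_2$ to be genuinely small and makes the bookkeeping of cross-terms tight. A secondary but essential difficulty is that $g^\eps(t)$ is only translation bounded and fails to be integrable in time: the classical Gronwall argument is therefore insufficient, and the specialized lemma of~\cite{Patonw} is precisely what makes the $\eps$-independent absorbing term $c(1+Q_\eps)$ possible.
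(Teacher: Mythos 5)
Your overall architecture (a main energy functional, the two auxiliary multipliers $\l \pt u,u\r$ and a velocity--memory coupling, then the Gronwall lemma of~\cite{Patonw}) matches the paper's, but there is a genuine gap at the point where you claim to close a \emph{linear} differential inequality $\ddt\Lambda+\omega_0\Lambda\le c\|g^\eps\|^2+c$. For $p>1$ this cannot be achieved: when you differentiate $\Psi_2$ you must estimate the coupling $\int_0^\infty\mu(s)|\l f(u),\eta(s)\r|\,\d s\le c(1+\|u\|_{L^{p+1}}^{p})\|\eta\|_{\M}$, and any Young splitting that sends the $\eta$-factor into the memory dissipation $I$ leaves behind a term of order $\|u\|_{L^{p+1}}^{2p}\sim[\Phi(U)]^{2p/(p+1)}$, which is superlinear in the energy (the alternative splitting produces $\|\eta\|_\M^{p+1}$, which is just as bad). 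The paper does not avoid this; it embraces it by working with the whole one-parameter family $\Lambda_\oo=E+\oo L$ and closing the inequality in the form $\ddt\Lambda_\oo+\oo\Lambda_\oo\le c\oo^2\Lambda_\oo^{\beta}+\frac{c}{\oo}\|g^\eps\|^2+c$ with $\beta=\frac{2p}{p+1}<\frac32$ --- which is precisely where the subcubic restriction $p<3$ enters. Lemma~\ref{LEMMAPatonwall} is designed for exactly this family structure (small linear dissipation of rate $\oo$ competing with a superlinear term weighted by $\oo^2$). Your stated reason for invoking it, namely that $g^\eps$ is only translation bounded, is not the real issue: if your inequality were genuinely linear, the classical Gronwall lemma combined with the elementary bound $\int_\tau^t\e^{-\omega(t-y)}h(y)\,\d y\le(1-\e^{-\omega})^{-1}\sup_t\int_t^{t+1}h(y)\,\d y$ would already yield the conclusion, and no special lemma would be needed.

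A secondary but real gap: your multiplier $\Psi_2=-\l \pt u,\int_0^\infty\mu(s)\eta^t(s)\,\d s\r$ uses the kernel $\mu$ itself, and the integration by parts in $s$ produces the term $\l \pt u,\int_0^\infty\mu'(s)\eta^t(s)\,\d s\r$, whose control via $I$ requires $\int_0^\infty|\mu'(s)|\,\d s=\mu(0^+)<\infty$. The paper explicitly admits kernels that are (weakly) singular at the origin, such as $\mu(s)=Cs^{-\alpha}\e^{-\delta s}$, which is why it replaces $\mu$ by the truncated kernel $\mu_\varpi$ in the functional $L_1$, at the price of an error $\epsilon_\varpi\|u\|_1^2$ that must then be reabsorbed using the $\l\pt u,u\r$ multiplier. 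Without this truncation your computation breaks down for the admissible singular kernels.
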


The theorem has a straightforward corollary.

\begin{corollary}
\label{corco}
For every fixed $\eps\in [0,1]$, the process $S_{\eps}(t,\tau)$ has a uniform
(with respect to $\tau \in \R$) absorbing set.
\end{corollary}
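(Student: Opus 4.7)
The corollary is a nearly immediate consequence of Theorem \ref{Th-EE}, so the plan is essentially a two-step unpacking, with no real obstacle to speak of.

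First, I would extract an $\H$-norm bound from the $\Phi$-estimate. Since by \eqref{phii} we have $\Phi(U) \geq \tfrac12 \|U\|^2_\H$, Theorem \ref{Th-EE} immediately yields, for every $\eps \in [0,1]$, every $\tau \in \R$, every $U_\tau \in \H$ and every $t \geq \tau$,
$$
\|U(t)\|_\H^2 \leq 2\Q(\|U_\tau\|_\H)\e^{-\omega(t-\tau)} + 2c(1+Q_\eps).
$$

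Next, I would convert this into the existence of an absorbing ball. Fix $\eps \in [0,1]$ and set $R_\eps^2 = 1 + 2c(1+Q_\eps)$. For any bounded set $B \subset \H$ contained in the ball of radius $R$, choose $t_B = t_B(R) \geq 0$ large enough that $2\Q(R)\e^{-\omega t_B} \leq 1$. Then for every $\tau \in \R$, every $U_\tau \in B$ and every $t \geq \tau + t_B$, the estimate above gives $\|S_\eps(t,\tau)U_\tau\|_\H \leq R_\eps$. Hence the closed ball
$$
\B_\eps = \{U \in \H :\,\|U\|_\H \leq R_\eps\}
$$
is absorbing for $S_\eps(t,\tau)$, uniformly with respect to the initial time $\tau \in \R$, and the entering time $t_B$ depends only on the size of $B$ (not on $\tau$).

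There is no genuine difficulty in the argument: all the real work sits inside Theorem \ref{Th-EE}. I would, however, remark that the uniformity asserted by the corollary is only in $\tau$; the radius $R_\eps$ still blows up as $\eps \to 0$ through the factor $Q_\eps \sim \eps^{-2\rho}$ appearing in \eqref{Q}, and obtaining bounds uniform in $\eps$ under the restriction $\rho<1$ is precisely the more delicate matter deferred to \S\ref{SBdd}.
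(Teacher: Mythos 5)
Your argument is correct and follows the same route as the paper: read off the absorbing set directly from the asymptotic estimate of Theorem \ref{Th-EE}, choosing the entering time so that the exponential term falls below a fixed constant. The only (cosmetic) difference is that the paper takes the sublevel set $\B^{\eps}=\{U\in\H:\Phi(U)\leq R\}$ with $R>c(1+Q_\eps)$ rather than an $\H$-ball, which also records the $L^{p+1}$ information in $\Phi$ and is the form later used to derive \eqref{sizea}; your remark on the blow-up of the radius as $\eps\to 0$ matches the paper's own observation.
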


By definition, this is
a bounded set $\B^\eps\subset\H$ with the following property: for any
bounded set $\B\subset \H$ of initial data assigned at time $\tau\in\R$,
there is an entering time $t_\e=t_\e(\B,\eps)>0$,
independent of $\tau$, such that
$$
S_{\eps}(t,\tau)\B\subset \B^{\eps },\quad \forall t\geq \tau +t_\e.
$$
It is then apparent after
Theorem \ref{Th-EE} that one can take as $\B^\eps$ the closed subset of $\H$
\begin{equation}
\B^{\eps }=\big\{U\in\H:\;\Phi(U)\leq R \big\},
\label{s1r32}
\end{equation}
for any fixed
$$R>c\left( 1+Q_\eps\right).$$

\begin{remark}
Note that, although $\B^\eps$ is bounded in $\H$ for every given $\eps$,
its norm blows
up to infinity in the limit $\eps\to 0$.
\end{remark}

The remaining of the section is devoted to the proof of Theorem \ref{Th-EE}.

\subsection{A preliminary lemma}
The main tool needed in the proof is a Gronwall-type lemma from~\cite{Patonw}.

\begin{lemma}
\label{LEMMAPatonwall}
Let $\Lambda_\oo$ be a family of absolutely continuous nonnegative functions
on $[\tau,\infty)$
satisfying for every $\oo>0$ small
the differential inequality
$$
\ddt\Lambda_\oo(t)+\oo \Lambda_\oo(t)\le c\oo^2[\Lambda_\oo(t)]^\beta+\frac{g(t)}\oo,
$$
where $1\leq \beta<\frac32$ and
$$\sup_{t\geq \tau}\int_{t}^{t+1} |g(y)|\d y=M<\infty.$$
Moreover, let $\phi$ be a continuous nonnegative function on $[\tau,\infty)$
such that
$$\frac1{C_0}\,\phi(t)\leq \Lambda_\oo(t)\leq C_0\phi(t)+C_1$$
for every $\oo>0$ small and some $C_0\geq 1$, $C_1\geq 0$.
Then, there exist $\omega>0$, $C\geq 0$ and an increasing positive function $\Q(\cdot)$
such that
$$\phi(t)\leq \Q(\phi(\tau))\e^{-\omega (t-\tau)}+CM+C.$$
If $M=0$, the constant $C$ is zero as well, yielding the exponential decay of $\phi$.
\end{lemma}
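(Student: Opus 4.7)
My plan is to run a bootstrap/continuity argument coupling the choice of the parameter $\oo$ to the initial data, so as to absorb the superlinear term $c\oo^2[\Lambda_\oo]^\beta$ into the linear dissipation $\oo\Lambda_\oo$. The hypothesis $\beta<3/2$ is precisely what makes the absorption possible.

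First I would establish a conditional linear estimate: fix $\oo>0$ and suppose $\Lambda_\oo(t)\leq N$ on a subinterval, with $N$ chosen so that $c\oo N^{\beta-1}\leq 1/2$. Then $[\Lambda_\oo]^\beta\leq N^{\beta-1}\Lambda_\oo$, the superlinear term is dominated by $\tfrac{\oo}{2}\Lambda_\oo$, and the differential inequality collapses to $\ddt\Lambda_\oo+\tfrac{\oo}{2}\Lambda_\oo\leq g(t)/\oo$. Standard Gronwall, together with the elementary bound $\int_\tau^t\e^{-\oo(t-s)/2}|g(s)|\d s\leq CM/\oo$ (obtained by splitting $[\tau,t]$ into unit intervals and summing a geometric series), yields
\begin{equation*}
\Lambda_\oo(t)\leq\Lambda_\oo(\tau)\e^{-\oo(t-\tau)/2}+CM/\oo^2,
\end{equation*}
so long as the bootstrap assumption $\Lambda_\oo\leq N$ persists. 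To close the bootstrap, write $K_0:=C_0\phi(\tau)+C_1$ so that $\Lambda_\oo(\tau)\leq K_0$, and choose $\oo=\oo^*$ small enough that the candidate threshold $N:=K_0+CM/(\oo^*)^2+1$ itself satisfies $c\oo^*N^{\beta-1}\leq 1/2$. The dominant contribution in this inequality comes from the $CM/(\oo^*)^2$ part of $N$ and scales like $M^{\beta-1}(\oo^*)^{3-2\beta}$; since $3-2\beta>0$, such an $\oo^*$ exists as a function of $K_0$ and $M$ alone. A continuity argument applied to $t^\star:=\inf\{t>\tau:\Lambda_{\oo^*}(t)\geq N\}$ then shows that $t^\star=\infty$, and hence
\begin{equation*}
\phi(t)\leq C_0\Lambda_{\oo^*}(t)\leq C_0 K_0\e^{-\oo^*(t-\tau)/2}+CC_0M/(\oo^*)^2\qquad\forall\,t\geq\tau.
\end{equation*}

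The final step recasts this into the announced form $\Q(\phi(\tau))\e^{-\omega(t-\tau)}+CM+C$, with $\omega$ independent of the initial data. I split according to whether $K_0$ lies below or above a threshold $K^\star(M)$ determined by the structural constants and $M$. If $K_0\leq K^\star(M)$, the choice $\oo^*=\oo_0(M)$ is admissible, the decay rate $\omega:=\oo_0(M)/2$ is independent of the initial data, and the residual $CM/\oo_0(M)^2$ is of the required form (vanishing when $M=0$). If $K_0>K^\star(M)$, apply the estimate above on an initial transient $[\tau,\tau+T_1]$ with $T_1=T_1(\phi(\tau))$ just long enough for $\phi$ to enter the first regime; thereafter the first-regime argument takes over, and the compensating factor $\e^{\omega T_1(\phi(\tau))}$ introduced by the time shift is absorbed into a redefined, still increasing, function $\Q(\phi(\tau))$. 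The principal obstacle is the superlinear term itself, which admits no pointwise domination without an a priori bound on $\Lambda_\oo$: breaking this circularity forces the bootstrap to tie $\oo^*$ to both $\phi(\tau)$ and $M$, and the two-regime splitting is precisely what restores a universal decay rate. Throughout, the inequality $\beta<3/2$ is indispensable, both in solving $c\oo^*N^{\beta-1}\leq 1/2$ and in keeping $T_1$ finite.
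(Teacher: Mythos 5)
A preliminary caveat: the paper contains no proof of Lemma~\ref{LEMMAPatonwall} --- it is imported from \cite{Patonw} --- so your attempt can only be measured against the statement itself and against the way the lemma is used later in the paper.

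Your Steps~1 and~2 are correct: for a single parameter $\oo^*$ chosen so that $c\oo^* N^{\beta-1}\leq\frac12$ with $N=K_0+CM/(\oo^*)^2+1$, the bootstrap closes and gives $\phi(t)\leq C_0K_0\e^{-\oo^*(t-\tau)/2}+CC_0M/(\oo^*)^2$. The genuine gap is in the final recasting. As you yourself compute, the bootstrap forces $\oo^*\lesssim\min\{K_0^{-(\beta-1)},M^{-(\beta-1)/(3-2\beta)}\}$. Hence, for $\beta>1$: (i) the decay rate $\omega=\oo^*/2$ degenerates as $M\to\infty$, and your two-regime splitting removes only the dependence of $\omega$ on $\phi(\tau)$, not on $M$; (ii) the residual $CM/(\oo^*)^2$ is of order $M^{1+2(\beta-1)/(3-2\beta)}=M^{1/(3-2\beta)}$, which is strictly superlinear in $M$. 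The claim that $CM/\oo_0(M)^2$ ``is of the required form'' is therefore not justified: the required form is $CM+C$ with $\omega$ and $C$ independent of $M$, and this is exactly what the paper exploits later (in Theorem~\ref{Th-EE} the rate $\omega$ is asserted to be independent of $\eps$ while $M\sim Q_\eps\to\infty$, and the linearity of the residual in $M$ is what produces \eqref{sizea} and drives the exponent recursion in the proof of Theorem~\ref{s3the1}). So your argument establishes a strictly weaker statement than the one announced.

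Moreover, this cannot be repaired by refining your scheme (one fixed $\oo$, or finitely many regimes glued in time): take $g\equiv M$ and $\Lambda_\oo\equiv\phi\equiv P$ constant in $t$ and $\oo$; the differential inequality reduces to $\oo^2P-c\oo^3P^\beta\leq M$ for all small $\oo$, and the left-hand side has maximum $\frac{4}{27c^2}\,P^{3-2\beta}$ over $\oo>0$, so every hypothesis of the lemma is met with $P$ as large as a multiple of $M^{1/(3-2\beta)}$. Under the hypotheses exactly as stated, then, no argument can produce a residual smaller than $O(M^{1/(3-2\beta)})$, and your bound is in fact sharp in that regime. The conclusion with a residual linear in $M$ and an $M$-independent rate must therefore rely on something not captured here (additional structure or a different normalization in \cite{Patonw}, or constants tacitly allowed to depend on $M$); before building on the lemma in the announced form you should check the precise statement and proof in \cite{Patonw}.
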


\subsection{Energy functionals}
Let now $\eps\in [0,1]$ be fixed, and let
$$U(t)=(u(t), \pt u(t), \eta^t)$$
be the solution to \eqref{PROBLEM} (or \eqref{SYSABS} which is the same)
originating from a given $U_\tau\in\H$ at time $t=\tau$.
In what follows, we will use several times without explicit mention the Young,
H\"older and Poincar\'e inequalities.
We will also perform several formal computations, all justified within a suitable
regularization scheme. The reader is addressed to~\cite{PAT},
where the same estimates have been carried out for the linear model.

\smallskip
\noindent
$\bullet$
We begin to introduce the main energy functional
$$E(t)=\frac12\|U(t)\|_{\H}^2+{\mathcal F}(u(t))+c_E.
$$
Up to choosing the constant $c_E>0$ sufficiently large,
it is clear from~\eqref{CTRLF} that
\begin{equation}
\label{Pippo1}
\frac1c\Phi(U(t))\leq E(t)\leq c\Phi(U(t))+c, \quad c\geq 1.
\end{equation}
The basic multiplication of \eqref{SYSABS} by $U$ in $\H$ gives
\begin{equation}
\label{STIMALYA1}
\ddt E+I=\l g^\eps,\pt u\r\leq \|g^\eps\|\|\pt u\|,
\end{equation}
having set
$$I(t)=-\frac12\int_0^\infty \mu'(s)\|\eta^t(s)\|_1^2\d s.$$
On account of~\eqref{K2}, we have the control
\begin{equation}
\label{stimagetta}
\frac\delta2\|\eta^t\|_\M^2\leq I(t).
\end{equation}

\smallskip
\noindent
$\bullet$
Next, in order to handle the possible singularity of $\mu$ at zero,
we choose $\varpi>0$ small, to be properly fixed later, and $s_\varpi>0$ such that
$$\int_{0}^{s_\varpi}\mu(s)\d s \leq \frac{\varpi\kappa_0}{2}.$$
Setting
$$
\mu_\varpi(s)=
\begin{cases}
\mu(s_\varpi), & 0<s\leq s_\varpi,\\
\mu(s), & s>s_\varpi,
\end{cases}
$$
we introduce the auxiliary functionals
\begin{align*}
&L_1(t)=-\frac{1}{\kappa_0}\int_0^\infty\mu_\varpi(s)\l \pt u(t),\eta^t(s)\r \d s,\\
&L_2(t)=\l \pt u(t), u(t)\r.
\end{align*}
Then, we have the inequality (cf.\ \cite{GMPZ,PAT})
\begin{align*}
\label{diff1}
\ddt L_1+(1-\varpi)\|\pt u\|^2
&\leq \epsilon_\varpi\|u\|_1^2
+c\|\eta\|_{\M }^2
+c I +\int_0^\infty\mu(s)|\l f(u),\eta(s)\r|\d s\\
&\quad+\int_0^\infty\mu(s)|\l g^\eps,\eta(s)\r|\d s
\end{align*}
for some $\epsilon_\varpi>0$, with the property that $\epsilon_\varpi\to 0$ as $\varpi\to 0$.
Here the constant $c$ may possibly blow up when $\varpi\to 0$.
Exploiting \eqref{utile}, and subsequently using \eqref{CTRLF},
\begin{align*}
\int_0^\infty\mu(s)|\l f(u),\eta(s)\r|\d s&\leq c\|f(u)\|_{L^{6/5}}\int_0^\infty\mu(s)\|\eta(s)\|_1\d s\\
&\leq c|{\mathcal F}(u)|^{\frac{p}{p+1}}\|\eta\|_{\M}+c\|\eta\|_{\M}\\
\noalign{\vskip2mm}
&\leq c\|u\|^{p}_{L^{p+1}}\|\eta\|_{\M}+c\|\eta\|_{\M}^2+c,
\end{align*}
while
$$\int_0^\infty\mu(s)|\l g^\eps,\eta(s)\r|\d s\leq c\|g^\eps\|^2+c\|\eta\|_\M^2.$$
Hence, recalling \eqref{stimagetta}, we end up with
\begin{equation}
\label{diff1}
\ddt L_1+(1-\varpi)\|\pt u\|^2
\leq \epsilon_\varpi\|u\|_1^2
+c I+c\|u\|^{p}_{L^{p+1}}\sqrt{I}\,
+c\|g^\eps\|^2+c.
\end{equation}
Concerning $L_2$, we have the equality
$$
\ddt L_2+\| u \|_1^2+\l f(u),u\r
= \|\pt u\|^2 -\l u,\eta\r_{\M}+\l g^\eps,u\r,
$$
and by means of \eqref{CTRLf} and \eqref{stimagetta} we obtain
\begin{equation}
\label{diff2}
\ddt L_2+\frac12\|u\|_1^2+d\|u\|^{p+1}_{L^{p+1}}
\leq \|\pt u\|^2 +c I+c\|g^\eps\|^2+c.
\end{equation}
At this point,
we define
$$L(t)=2L_1(t)+L_2(t),
$$
noting that
\begin{equation}
\label{Pippo2}
|L(t)|\leq c \|U(t)\|_{\H}^2.
\end{equation}
Collecting \eqref{diff1}-\eqref{diff2}, and fixing $\varpi$ suitably small,
we draw the differential inequality
\begin{equation}
\label{STIMALYA2}
\ddt L+\alpha\Big[\|u\|_1^2+\|\pt u\|^2+\|u\|^{p+1}_{L^{p+1}}\Big]
\leq
c I+c\|u\|^{p}_{L^{p+1}}\sqrt{I}\,
+c\|g^\eps\|^2+c,
\end{equation}
where, say, $\alpha=\min\{\frac14,d\}$.

\subsection{Proof of Theorem \ref{Th-EE}}
We introduce the family of energy functionals depending on $\oo>0$ small
$$\Lambda_\oo(t)=E(t)+\oo L(t).$$
Due to \eqref{Pippo1} and \eqref{Pippo2}, for every $\oo$ small enough
we have the control
\begin{equation}
\label{nore}
\frac1c\Phi(U(t))\leq \Lambda_\oo(t)\leq c\Phi(U(t))+c, \quad c\geq 1.
\end{equation}
Besides, from \eqref{STIMALYA1} and \eqref{STIMALYA2} we deduce
the family of differential inequalities
\begin{align*}
&\ddt\Lambda_\oo+\alpha\oo\Big[\|u\|_1^2+\|\pt u\|^2+\|u\|^{p+1}_{L^{p+1}}\Big]+(1-c\oo)I\\
&\leq c\oo\|u\|^{p}_{L^{p+1}}\sqrt{I}\,
+c\|g^\eps\|^2+\|g^\eps\|\|\pt u\|+c,
\end{align*}
which, after simple manipulations and a further use of \eqref{stimagetta}, enhances to
$$\ddt\Lambda_\oo+\alpha\oo\Phi(U)
\leq c\oo^2\|u\|^{2p}_{L^{p+1}}
+\frac{c}\oo\|g^\eps\|^2+c,
$$
for all $\oo>0$ sufficiently small.
Observing that
$$\|u\|^{2p}_{L^{p+1}}\leq [\Phi(U)]^{\frac{2p}{p+1}},
$$
and using the double control provided by \eqref{nore},
we finally obtain (up to redefining $\oo$),
$$\ddt\Lambda_\oo+\oo\Lambda_\oo
\leq c\oo^2\Lambda_\oo^\beta
+\frac{c}\oo\|g^\eps\|^2+c,
$$
with
$$\beta=\frac{2p}{p+1}.$$
Note that $\beta\in[1,\frac32)$, since $p\in[1,3)$.
Thus, having in mind~\eqref{s1BOUND}, the latter inequality
together with \eqref{nore}
allow us to apply Lemma~\ref{LEMMAPatonwall} with $\phi(t)=\Phi(U(t))$,
yielding the desired conclusion.
This finishes the proof of Theorem~\ref{Th-EE}.
\qed
%%%%%%%%%%%%%%%%%%%%%%%%%%%%%%%%%%%%%%%%%%%%

%%%%%%%%%%%%%%%%%%%%%%%%%%%%%%%%%%%%%%%%%%%%
\section{Uniform Global Attractors}
\label{SUni}

\subsection{Translation compact external forces}
We make the following assumption:
\begin{equation}
\label{TrCPT}
\text{both } g_0\text{ and } g_1\text{ are translation compact in } L_{\rm loc}^2(\R;\HH).
\end{equation}
By definition, this means
that for $\imath=0,1$ the set of translates
$$\textsf{T}(g_\imath)=\{g_{\imath}(\cdot+y):\,y\in \R\}
$$
is precompact in $L_{\rm loc}^2(\R;\HH)$.

\begin{definition}
The closure of the set $\textsf{T}(g_\imath)$ in the space $L_{\rm loc}^2(\R;\HH)$
is called the {\it hull} of $g_\imath$, and is denoted by $\textsf{H}(g_\imath)$.
\end{definition}

\begin{remark}
Given any $g$ translation compact in $L_{\rm loc}^2(\R;\HH)$,
then a function $\hat g$ belongs to $\textsf{H}(g)$ if and only if
there exists a
sequence $y_n\in\R$ such that
$$\lim_{n\to\infty}\,\int_a^b\|g(t+y_n)-\hat g(t)\|^2\d t=0,\quad\forall b>a.
$$
We address the reader to \cite{CVbook} for more details on translation compact functions.
\end{remark}

It is easily seen that $g^\eps$
is translation compact in $L_{\rm loc}^2(\R;\HH)$ if and only if~\eqref{TrCPT} holds.
In that case, a function $\hat g^\eps$ belongs to the hull $\textsf{H}(g^\eps)$ of $g^\eps$
if and only if
$$\hat g^\eps (t)=\hat g_{0}(t)+\eps ^{-\rho }\hat g_{1}(t/\eps),
$$
for some $\hat g_\imath\in \textsf{H}(g_\imath)$.
It is also apparent that
$$\| \hat g^\eps\|_{{\rm tb}}\leq \|g^\eps\|_{{\rm tb}}.$$
Thus, on account of~\eqref{s1BOUND}, we obtain the bound
$$
\| \hat g^\eps\|_{{\rm tb}}^2
\leq Q_\eps,\quad\forall \hat g^\eps\in \textsf{H}(g^\eps).
$$

\subsection{The family of processes}
We now consider, in place of the single problem~\eqref{SYSABS}, the family of equations
\begin{equation}
\label{hatPROBLEM}
\begin{cases}
\displaystyle
\ddt U(t)={\mathbb A} U(t)+\hat{\mathbb F}_\eps (U(t),t),\\
\noalign{\vskip1.5mm}
U(\tau)=U_\tau,
\end{cases}
\end{equation}
where
$$\hat{\mathbb F}_\eps(U,t)=\big(0,\hat g^\eps(t)-f(u),0\big),\quad \hat g^\eps \in\textsf{H}(g^\eps).
$$
Clearly, for any given $\hat g^\eps$, problem~\eqref{hatPROBLEM} generates a
(jointly continuous) dynamical process
$$S_{\hat g^\eps}(t,\tau):\H\to\H.$$
With no changes in the proof, the analogue of Theorem~\ref{Th-EE} holds. Namely, for every
$t\geq\tau$ and every initial datum $U_{\tau }\in \H$, the solution
$$U(t)=S_{\hat g^\eps}(t,\tau)U_\tau$$
fulfills the estimate
$$
\Phi(U(t))\leq \Q(\|U_\tau\|_{\H})\e^{-\omega (t-\tau )} +c\left(1+Q_\eps\right).
$$
In particular, arguing as in Corollary~\ref{corco},
the family $S_{\hat g^\eps}(t,\tau)U_\tau$ possesses an absorbing set (that we keep calling $\B^\eps$),
which is uniform with respect to both $\tau \in \R$ and $\hat g^\eps \in\textsf{H}(g^\eps)$.

\subsection{Existence of uniform global attractors}
We begin with two definitions.

\begin{definition}
A set ${\mathcal K}\subset \H$ is said to be
{\it uniformly} (with respect to $\tau\in\R$) {\it attracting} for the process
$S_\eps(t,\tau )$ if for any bounded set $\B\subset \H$ we have
the limit relation
$$
\lim_{t-\tau \to \infty }\,
\mathrm{dist}_{\H}\big(S_\eps(t,\tau)\B,{\mathcal K}\big)=0.
$$
\end{definition}

\begin{definition}
A compact set $\A^\eps\subset \H$ is said to be the
{\it uniform global attractor} of the process
$S_\eps(t,\tau )$ if it
is the minimal uniformly attracting set. The minimality property means that $\A^\eps$ belongs to
any compact uniformly attracting set.
\end{definition}

The following holds.

\begin{theorem}
\label{s1pro1}
Let \eqref{TrCPT} hold. Then, for any fixed $\rho\in [0,1]$
and $\eps\in[0,1]$, the process $S_{\eps}(t,\tau):\H\to\H$ possesses
the uniform global attractor $\A^\eps$.
\end{theorem}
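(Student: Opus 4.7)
The plan is to verify the standard Chepyzhov-Vishik criterion (see~\cite{CVbook}): for a family of processes indexed by a compact symbol space, a uniform global attractor exists provided (i) there is a uniformly bounded absorbing set, (ii) the family is uniformly asymptotically compact with respect to both $\tau\in\R$ and the symbol, and (iii) the map $(U_\tau,\hat g^\eps)\mapsto S_{\hat g^\eps}(t,\tau)U_\tau$ is jointly continuous in the appropriate topologies. Hypothesis~\eqref{TrCPT} guarantees that $\textsf{H}(g^\eps)$ is a compact subset of $L^2_{\rm loc}(\R;\HH)$, so this framework is available.

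Ingredient (i) is already in place: the $\hat g^\eps$-version of Theorem~\ref{Th-EE}, already recorded in the text, provides the absorbing set $\B^\eps$ from~\eqref{s1r32}, uniform in $\tau\in\R$ and $\hat g^\eps\in\textsf{H}(g^\eps)$. Ingredient (iii) is routine: a Gronwall estimate on the difference of two solutions, using that~\eqref{crescita} together with $p<3$ makes $f$ locally Lipschitz from $\HH^1$ into $\HH^{-1}$, combined with the $L^2_{\rm loc}$-convergence of translates of symbols within the hull, gives the required joint continuity.

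The main step, and where I expect the main obstacle, is (ii): the memory component $\eta$ lives in the weighted space $\M=L^2_\mu(\R^+;\HH^1)$, where compactness cannot be recovered by any single fixed-time higher-regularity estimate. I would use the by-now classical decomposition tailored to viscoelastic equations (cf.~\cite{GMPZ,PAT}): write $U(t)=V(t)+W(t)$, where $V$ solves the linear homogeneous problem $\ddt V={\mathbb A}V$ with initial datum $U_\tau$, while $W$ is driven by $\hat{\mathbb F}_\eps(U(t),t)$ with null initial data. A direct energy estimate for $V$, relying on~\eqref{K2} exactly as in the derivation of~\eqref{stimagetta}, gives exponential decay $\|V(t)\|_\H\leq c\|U_\tau\|_\H\,\e^{-\omega(t-\tau)}$. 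For $W$, the subcritical growth $p<3$ buys extra regularity: along trajectories in $\B^\eps$, $f(u)$ is bounded in a space slightly better than $\HH^{-1}$, and a bootstrap yields $W(t)$ uniformly bounded in some $\H^\sigma=\HH^{1+\sigma}\times\HH^\sigma\times\M^\sigma$ with $\sigma>0$, uniformly in $\tau$ and in $\hat g^\eps\in\textsf{H}(g^\eps)$.

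Compactness of $W(t)$ in $\H$ (and hence uniform asymptotic compactness of the process, modulo the exponentially small error $V(t)$) then follows from an Aubin-Lions-type lemma for memory spaces (see~\cite{PAT}): a set bounded in $\H^\sigma$ that is uniformly small at $s=\infty$ and suitably equicontinuous in $s$ near $s=0$ is precompact in $\H$. The exponential decay at infinity comes from~\eqref{K2}; the behavior near zero is handled by the cutoff $\mu_\varpi$ already introduced in \S\ref{SDis}. With (i)-(iii) in hand, the uniform global attractor is realized as the uniform $\omega$-limit set
\begin{equation*}
\A^\eps=\bigcap_{t\geq\tau}\overline{\bigcup_{s\geq t}\,\bigcup_{\hat g^\eps\in\textsf{H}(g^\eps)}S_{\hat g^\eps}(s,\tau)\B^\eps}^{\,\H},
\end{equation*}
which is nonempty, compact in $\H$, independent of $\tau$, and coincides with the $\A^\eps$ of the statement.
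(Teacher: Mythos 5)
Your overall architecture---uniform absorbing set, joint continuity, and asymptotic compactness via the splitting $U=V+W$, with $V$ solving the homogeneous linear problem from $U_\tau$ and $W$ driven by $\hat{\mathbb F}_\eps(U(t),t)$ with null data---is exactly the paper's (the paper phrases asymptotic compactness through the Kuratowski measure and Theorem~\ref{absatt}, but the decomposition and the exponential decay of $V$ coincide with yours). The gap is in your treatment of $W$. You claim that a bootstrap yields $W(t)$ uniformly bounded in $\H^\sigma$ for some $\sigma>0$. This cannot hold: the second component of the driving term is $\hat g^\eps(t)-f(u(t))$, and while the subcritical growth $p<3$ does place $f(u)$ in a space compactly embedded in $\HH$, the external force is only assumed translation compact in $L^2_{\rm loc}(\R;\HH)$, so $\hat g^\eps(t)$ takes values in $\HH$ and nothing better. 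The equation has no smoothing, so the part of $W$ generated by $\hat g^\eps$ has exactly the regularity of $\H$; a uniform $\H^\sigma$ bound with $\sigma>0$ would require $g_0,g_1$ to be $\HH^\sigma$-valued, which is not assumed. Put differently, your argument uses hypothesis \eqref{TrCPT} only to make the symbol space compact, whereas \eqref{TrCPT} is precisely what must supply the missing compactness of the forced part of $W$.

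The repair is to split further, $W=W_1+W_2$, with $W_1$ driven by $-f(u)$ and $W_2$ by $\hat g^\eps$. Your higher-regularity argument, combined with the tail/equicontinuity compactness criterion for the memory space $\M$ (which you correctly identify as indispensable---mere boundedness in $\M^\sigma$ is not compact in $\M$, and the paper is rather terse on this point), disposes of $W_1$. For $W_2$, use the Duhamel formula $W_2(t)=\int_\tau^t S(t-y)\,(0,\hat g^\eps(y),0)\,\d y$ and note that $h\mapsto\int_\tau^t S(t-y)(0,h(y),0)\,\d y$ is linear and continuous from $L^2(\tau,t;\HH)$ into $\H$; since \eqref{TrCPT} makes the set of restrictions $\{\hat g^\eps|_{[\tau,t]}:\hat g^\eps\in\textsf{H}(g^\eps)\}$ precompact in $L^2(\tau,t;\HH)$, the set of values $W_2(t)$ is precompact in $\H$ for each fixed $t$. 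This is in substance what the paper does when it proves precompactness of the set ${\mathcal K}_{t,\tau}$ of Duhamel integrals by referring to \cite{CVbook}.
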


\begin{remark}
\label{miii}
What one actually proves is the existence of the uniform global attractor $\A_{\textsf{H}(g^\eps)}$ for the
family of processes generated by~\eqref{hatPROBLEM}
$$\big\{ S_{\hat g^\eps}(t,\tau):\,\hat g^\eps\in \textsf{H}(g^\eps)\big\}.$$
Such an object satisfies the stronger attraction property
$$
\lim_{t-\tau \to \infty }\Big[\sup_{\hat g^\eps\in \textsf{H}(g^\eps)}\,
\mathrm{dist}_{\H}\big(S_{\hat g^\eps}(t,\tau)\B,\A_{\textsf{H}(g^\eps)}\big)\Big]=0.
$$
However, since it is standard matter to prove the continuity of the map
$$\hat g^\eps\mapsto S_{\hat g^\eps}(t,\tau) U_\tau: \textsf{H}(g^\eps)\to \H,$$
for every fixed $t\geq \tau$ and $U_\tau\in \H$, we draw from \cite[Theorem 29]{CCP} the equality
$$\A_{\textsf{H}(g^\eps)}=\A^\eps.$$
\end{remark}

The proof of the theorem exploits the next abstract result from \cite{CV1a,CV1,CVbook} (see also \cite{CCP}).

\begin{theorem}
\label{absatt}
Assume that the process is asymptotically compact, that is, there exists
a compact uniformly attracting set. Then there is the (unique) uniform global attractor.
\end{theorem}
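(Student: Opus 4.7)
The plan is to apply the abstract Theorem \ref{absatt} together with Remark \ref{miii}: it suffices to exhibit, for each fixed $\eps\in[0,1]$, a compact set in $\H$ that attracts bounded sets uniformly in $\tau\in\R$ and in $\hat g^\eps\in\textsf{H}(g^\eps)$ under the family of processes $S_{\hat g^\eps}(t,\tau)$. The continuous dependence on $\hat g^\eps$ spelled out in Remark \ref{miii} then identifies the resulting $\A_{\textsf{H}(g^\eps)}$ with the desired $\A^\eps$. Since the previous section already provides the uniformly absorbing bounded set $\B^\eps\subset\H$, the task reduces to showing that trajectories issuing from $\B^\eps$ are asymptotically absorbed by a compact subset of $\H$.

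To manufacture such a compact attracting set, I would use the standard splitting $U(t)=S_{\hat g^\eps}(t,\tau)U_\tau=V(t)+W(t)$, where $V$ solves the linear homogeneous Cauchy problem
\begin{equation*}
\ddt V={\mathbb A} V,\qquad V(\tau)=U_\tau,
\end{equation*}
while $W$ satisfies
\begin{equation*}
\ddt W={\mathbb A} W+(0,\hat g^\eps(t)-f(u(t)),0),\qquad W(\tau)=0.
\end{equation*}
The dissipation condition \eqref{K2} is known to imply exponential stability of the linear semigroup generated by ${\mathbb A}$ on $\H$, yielding a bound of the form $\|V(t)\|_{\H}\leq c\|U_\tau\|_{\H}\e^{-\omega(t-\tau)}$. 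For $W$, the goal is a uniform-in-$\tau$ bound in the smaller space $\H^\sigma$ for some suitably small $\sigma>0$, obtained via higher-order energy estimates in the spirit of Section \ref{SDis}. The inputs are: the $\HH^1$-boundedness of $u(t)$ inherited from $U\in\B^\eps$; the subcubic growth \eqref{crescita} of $f$, which through Sobolev embedding controls $f(u)$ in a negative-order space compatible with $\H^\sigma$; and the translation-bounded estimate on $\hat g^\eps$ valid throughout the hull.

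The main obstacle lies in the memory component $\zeta^t$ of $W$: since the translation generator $T$ on $\M$ does not regularize in the history variable $s$, lifting $\zeta^t$ to $\M^\sigma$ uniformly in time relies crucially on the strict exponential weight coming from \eqref{K2}, following the scheme developed for viscoelastic equations in \cite{PAT,GMPZ}; the parametric Gronwall Lemma \ref{LEMMAPatonwall} is again the tool that converts the resulting differential inequality into a uniform estimate
\begin{equation*}
\sup_{t\geq\tau}\|W(t)\|_{\H^\sigma}\leq R^\sigma_\eps,
\end{equation*}
with $R^\sigma_\eps$ independent of $\tau$ and of $\hat g^\eps\in\textsf{H}(g^\eps)$. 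The embedding $\H^\sigma\hookrightarrow\H$ is compact (from the compactness of $A^{-1}$ and of $\M^\sigma\hookrightarrow\M$ granted by \eqref{K2}), so the closed ball of radius $R^\sigma_\eps$ in $\H^\sigma$, combined with the exponentially decaying $V$-part, provides the sought compact uniformly attracting set. Theorem \ref{absatt} then delivers the uniform global attractor $\A^\eps$.
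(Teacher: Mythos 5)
Your proposal does not prove the statement in question. Theorem \ref{absatt} is the \emph{abstract} assertion that, for a dynamical process on $\H$, the mere existence of a compact uniformly attracting set already implies the existence of the (unique) uniform global attractor, i.e.\ of a \emph{minimal} compact uniformly attracting set. It is a general fact about processes, with no reference to the viscoelastic equation, to $\B^\eps$, to the nonlinearity, or to the kernel $\mu$. What you have written is instead a proof sketch of Theorem \ref{s1pro1}: you verify the \emph{hypothesis} of Theorem \ref{absatt} for the concrete process $S_{\hat g^\eps}(t,\tau)$ (via the decomposition $U=V+W$, exponential decay of the linear semigroup, and a compactness estimate on $W$), and then you ``apply the abstract Theorem \ref{absatt}'' to conclude. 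Since Theorem \ref{absatt} is precisely the statement you were asked to prove, the argument is circular: it invokes its own conclusion and supplies only one instance of its hypothesis. Note also that the paper itself offers no proof of this theorem; it is quoted from \cite{CV1a,CV1,CVbook}, so there is no in-paper argument your sketch could be matching.

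A genuine proof must construct the attractor from the assumed compact uniformly attracting set $\mathcal{K}$ and establish minimality. The standard route is to take the uniform $\omega$-limit set
$$
\A=\bigcap_{s\geq 0}\overline{\bigcup_{t-\tau\geq s}S(t,\tau)\mathcal{K}},
$$
show that it is nonempty and compact (a decreasing family of nonempty closed sets eventually contained in arbitrarily small neighbourhoods of the compact set $\mathcal{K}$, by the attraction property), that it uniformly attracts every bounded set (by contradiction, extracting convergent subsequences accumulating on $\mathcal{K}$), and that every compact uniformly attracting set contains $\A$, which yields both minimality and uniqueness. None of these steps appears in your proposal. As a secondary remark: even read as a proof of Theorem \ref{s1pro1}, your compactness argument for $W$ (a uniform bound in $\H^\sigma$ via higher-order energy estimates) differs from the paper's, which instead establishes precompactness in $\H$ of the set of Duhamel integrals $\mathcal{K}_{t,\tau}$ for each fixed $t$ following \cite{CVbook}; but this is beside the main point, which is that the wrong statement has been addressed.
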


A way to prove asymptotic compactness, in fact of the whole family of processes
$S_{\hat g^\eps}(t,\tau)$, is to show that
\begin{equation}
\label{kur}
\lim_{t-\tau\to \infty }\Big[\sup_{\hat g^\eps\in \textsf{H}(g^\eps)}\,
\alpha_\H\big(S_{\hat g^\eps}(t,\tau)\B^\eps\big)\Big]=0,
\end{equation}
where $\B^\eps$ is a uniform absorbing set for $S_{\hat g^\eps}(t,\tau)$,
and
$$\alpha_\H(\B)=\inf\big\{d: \text{$\B$ has a finite cover of balls of
$\H$ of diameter less than $d$}\big\}$$
denotes the
{\it Kuratowski measure of noncompactness}
of a bounded
set $\B\subset \H$ (see \cite{HAL} for more details on $\alpha_\H$).

\begin{proof}[Proof of Theorem \ref{s1pro1}]
In order to verify \eqref{kur}, for an arbitrarily fixed $\tau\in\R$, $U_\tau\in\B^\eps$
and $\hat g^\eps\in\textsf{H}(g^\eps)$,
let us decompose the solution $U(t)$ to~\eqref{hatPROBLEM} into the sum
$$U(t)=V(t)+W(t),$$
where
$$
\begin{cases}
\displaystyle
\ddt V(t)={\mathbb A} V(t),\\
\noalign{\vskip1.5mm}
V(\tau)=U_\tau,
\end{cases}
$$
and
$$
\begin{cases}
\displaystyle
\ddt W(t)={\mathbb A} W(t)+\hat{\mathbb F}_\eps (U(t),t),\\
\noalign{\vskip1.5mm}
W(\tau)=0.
\end{cases}
$$
The solution $V(t)$ to the first (linear) autonomous problem
can be written as
$$V(t)=S(t-\tau)U_\tau,$$
where $S(t)$ is an exponentially stable (contraction) semigroup on $\H$ (see \cite{PAT}).
Accordingly,
$$\|V(t)\|_\H^2\leq C\e^{-\omega (t-\tau )},$$
for some constant $C>0$ depending only on $\B^\eps$.
Such a conclusion can also be drawn from Theorem~\ref{Th-EE}.
Concerning $W(t)$, via the Duhamel representation formula we have
$$W(t)=W(t;\tau,\hat g^\eps)=\int_\tau^t S(t-y) \hat{\mathbb F}_\eps (U(y),y)\d y.
$$
Consequently, \eqref{kur} follows if one proves the precompactness in $\H$ of the set
$$
{\mathcal K}_{t,\tau}=\bigcup_{U_\tau\in\B^\eps}
\bigcup_{\hat g^\eps\in \textsf{H} (g^\eps)}
W(t;\tau,\hat g^\eps)
$$
for every fixed $t\geq\tau$. This can be done, with no essential differences, as in the case
of the nonautonomous damped hyperbolic equation treated in detail in \cite{CVbook} (see the proof of Proposition~VI.4.3 therein).
\end{proof}

Since the attractor is contained in any closed uniform absorbing set,
we learn from~\eqref{s1r32} that
\begin{equation}
\label{sizea}
\Phi(\A^\eps)\leq \frac{Q}{\eps ^{2\rho}},\quad\forall \eps\in(0,1],
\end{equation}
for some $Q>0$ independent of $\eps$.
In turn, this gives the bound
$$
\| \A^\eps\| _{\H}\leq \frac{c}{\eps ^{\rho}}.
$$
Thus, in principle, the size of the global attractor $\A^{\eps}$ of
equation~\eqref{PROBLEM} with singularly oscillating terms can grow to infinity
as the oscillating rate $1/\eps \to \infty$.

\subsection{Structure of the attractors}
We now provide a complete description
of the structure of the global attractors $\A^{\eps }$.

\begin{definition}
Let $\hat g^\eps\in\textsf{H}(g^\eps)$ be fixed. A function $y\mapsto U(y):\R\to \H$ is called
a {\it complete bounded trajectory} ({\cbt})
of $S_{\hat g^\eps}(t,\tau)$ if
\begin{itemize}
\item[(i)] $\sup_{y\in\R}\|U(y)\|_\H<\infty$, and
\smallskip
\item[(ii)] $U(y)=S_{\hat g^\eps}(y,\tau)U(\tau)$ for every $y\geq \tau$ and every $\tau\in\R$.
\end{itemize}
The {\it kernel section} of $\hat g^\eps$ at time $y$ is the set
$${\mathbb K}_{\hat g^\eps}(y)=\big\{U(y):\,U \text{ is a {\cbt} of }S_{\hat g^\eps}(t,\tau)\big\}.
$$
\end{definition}

We have now all the ingredients to state our characterization theorem,
which follows from the results of the recent paper~\cite{CCP},
generalizing the theory presented in~\cite{CVbook}.

\begin{theorem}
\label{THMChar}
Let \eqref{TrCPT} hold. Then,  for every $\eps\in[0,1]$,
the global attractor $\A^{\eps}$ of the process $S_\eps(t,\tau)$
has the form
$$
\A^{\eps }=\bigcup_{\hat g^\eps\in {\text{\rm \textsf{H}}} (g^\eps)}
{\mathbb K}_{\hat g^\eps}(y),
$$
for an arbitrarily given $y\in\R$. Moreover,
${\mathbb K}_{\hat g^\eps}(y)$ is non empty
for every $y\in\R$ and $\hat g^\eps\in \text{\rm \textsf{H}}(g^\eps)$.
\end{theorem}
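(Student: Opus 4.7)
My proposal is to deduce the statement from the standard skew-product construction, checking that the hypotheses of the abstract representation theorem in \cite{CCP} are in force. The crucial object is the skew-product semigroup on the extended phase space $\H\times\textsf{H}(g^\eps)$ given by
$$\Sigma(t)(U,\hat g^\eps)=\bigl(S_{\hat g^\eps}(t,0)U,\,\theta_t\hat g^\eps\bigr),$$
where $\theta_t$ denotes the translation $(\theta_t\hat g^\eps)(\cdot)=\hat g^\eps(\cdot+t)$, which maps $\textsf{H}(g^\eps)$ continuously into itself. The cocycle identity $S_{\theta_s\hat g^\eps}(t,0)=S_{\hat g^\eps}(t+s,s)$ turns $\Sigma(t)$ into a genuine semigroup, and joint continuity follows from property (iii) of \S\ref{SDyn} together with the continuity of the symbol-to-process map $\hat g^\eps\mapsto S_{\hat g^\eps}(t,0)U_\tau$ already invoked in Remark~\ref{miii}.

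Next I would establish the existence and strict invariance of a compact global attractor $\mathbb{A}\subset\H\times\textsf{H}(g^\eps)$ for $\Sigma(t)$. The ingredients are all in place: $\textsf{H}(g^\eps)$ is compact in $L^2_{\rm loc}(\R;\HH)$; by Corollary~\ref{corco} the set $\B^\eps$ absorbs uniformly in $\hat g^\eps$; and asymptotic compactness in the first coordinate is precisely \eqref{kur}. Hence $\Sigma(t)$ admits a compact, strictly invariant attractor $\mathbb{A}$, i.e.\ $\Sigma(t)\mathbb{A}=\mathbb{A}$ for every $t\ge 0$. The minimality of $\A^\eps$ recorded in Remark~\ref{miii}, together with the fact that $\textsf{H}(g^\eps)$ is itself the minimal $\theta_t$-invariant compact set attracting the hull, forces the identifications $\Pi_\H\mathbb{A}=\A^\eps$ and $\Pi_{\textsf{H}}\mathbb{A}=\textsf{H}(g^\eps)$.

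Granted these, the representation formula falls out. Fix $y\in\R$. For every $U_0\in\A^\eps$ there exists $\hat g^\eps_0\in\textsf{H}(g^\eps)$ with $(U_0,\theta_y\hat g^\eps_0)\in\mathbb{A}$; strict invariance of $\mathbb{A}$ yields a complete $\Sigma$-orbit in $\mathbb{A}$ through this point, whose first component translated by $-y$ defines a \cbt\ of $S_{\hat g^\eps_0}$ attaining $U_0$ at time $y$, so $U_0\in\mathbb{K}_{\hat g^\eps_0}(y)$. Conversely, any \cbt\ of $S_{\hat g^\eps}$ is bounded in $\H$ uniformly in time, hence is attracted by $\A^\eps$ and, by minimality, contained in it. Finally, non-emptiness of $\mathbb{K}_{\hat g^\eps}(y)$ for every $\hat g^\eps$ and $y$ follows from the surjectivity $\Pi_{\textsf{H}}\mathbb{A}=\textsf{H}(g^\eps)$: one picks any point of $\mathbb{A}$ whose second coordinate is $\theta_y\hat g^\eps$ and extracts the \cbt\ as above. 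The main obstacle is the minimality-based identification of the two projections of $\mathbb{A}$: the uniform attractor $\A^\eps$ is characterized by attraction of bounded sets of \emph{initial data}, not by the invariance of the extended set $\mathbb{A}$, so matching the two notions is the nontrivial content borrowed from \cite{CCP}.
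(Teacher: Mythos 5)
Your outline is essentially correct, but it follows a genuinely different route from the paper. The paper gives no skew-product construction at all: it obtains Theorem~\ref{THMChar} by direct appeal to the abstract characterization theorem of \cite{CCP}, and the remark following the statement records that the entire verification reduces to checking that the single map $(U_0,\hat g^\eps)\mapsto S_{\hat g^\eps}(y_\star,0)U_0$ is \emph{closed} for some $y_\star>0$. The whole point of \cite{CCP} is to replace the joint continuity of the process in $(U_0,\hat g^\eps)$ --- which your construction needs in order for $\Sigma(t)$ to be a continuous semigroup on $\H\times\textsf{H}(g^\eps)$ possessing a compact strictly invariant attractor $\mathbb{A}$ --- by this weaker closedness property. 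In the present problem joint continuity does hold, so your classical route (the one of \cite{CVbook}) goes through; but be aware that your sentence ``joint continuity follows from property (iii) together with the continuity of the symbol-to-process map'' is asserting more than separate continuity delivers: you additionally need that the Lipschitz constant of $U_\tau\mapsto S_{\hat g^\eps}(t,\tau)U_\tau$ on bounded sets is uniform over the hull (which is true here, since it depends only on the translation-bounded norms, uniformly bounded on $\textsf{H}(g^\eps)$). With that point made explicit, the rest of your argument --- strict invariance of $\mathbb{A}$, the identifications $\Pi_\H\mathbb{A}=\A_{\textsf{H}(g^\eps)}=\A^\eps$ via Remark~\ref{miii}, surjectivity of $\Pi_{\textsf{H}}\mathbb{A}$ because the translations act as a group on the compact hull, and the extraction of a {\cbt} through any point of $\mathbb{A}$ --- is the standard and correct derivation of the representation formula and of the non-emptiness of the kernel sections. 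In short: your approach buys a self-contained proof at the cost of a stronger (though here available) continuity hypothesis; the paper's approach buys a one-line verification under a weaker hypothesis at the cost of importing the machinery of \cite{CCP}.
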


\begin{remark}
Indeed, according to \cite{CCP}, it is enough to prove that the map
$$(U_0,\hat g^\eps)\mapsto S_{\hat g^\eps}(y_\star,0)U_0:\H\times \textsf{H}(g^\eps)\to \H$$
is closed for some $y_\star>0$.\footnote{Recall that a map $\psi:X\to Y$ is
closed if $\psi(x)=y$ whenever $x_n\to x$ and $\psi(x_n)\to x$.}
\end{remark}
%%%%%%%%%%%%%%%%%%%%%%%%%%%%%%%%%%%%%%%%%%%%

%%%%%%%%%%%%%%%%%%%%%%%%%%%%%%%%%%%%%%%%%%%%
\section{An Auxiliary Linear Problem}
\label{SAux}

\noindent
For further scopes,
we now consider for $\eps>0$ the family of auxiliary problems
on $[\tau,\infty)$
\begin{equation}
\label{LINEAReps}
\begin{cases}
\partial_{tt} v+Av
+\displaystyle\int_0^\infty \mu(s) A\zeta(s)\d s= k(t/\eps),\\
\zeta_t=T\zeta+\pt v,\\
\noalign{\vskip1.5mm}
(v(\tau),\pt v(\tau),\zeta^\tau)=(0,0,0),
\end{cases}
\end{equation}
where $k\in L^{2}_{\rm loc}(\R;\HH^{\sigma})$ for some $\sigma\in\R$.
Setting
$$K(t,\tau)=\int_{\tau}^{t}k(y)\d y,\quad t\geq\tau,$$
the following holds.

\begin{proposition}
\label{propLINEAR}
Assume that
\begin{equation}
\sup_{t\geq\tau,\,\tau\in\R}
\left\{\|K(t,\tau)\|_{\sigma-1}^2+\int_{t}^{t+1}\|K(y,\tau)\|_{\sigma }^2\d y\right\}\leq \ell^2,
\label{s2r9}
\end{equation}
for some $\ell\geq 0$.
Then,  problem
\eqref{LINEAReps} has a unique solution
$V(t)=(v(t),\partial _{t}v(t),\zeta^t)$
satisfying
$$
\|V(t)\|_{\H^{\sigma-1}}\leq c \ell\eps,\quad \forall t\geq \tau,
$$
where $c>0$ is independent of $k$.
\end{proposition}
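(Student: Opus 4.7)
The plan is to trade the fast oscillation $k(t/\eps)$ for its primitive, which is automatically of size $\eps$. Set
$$
\hat k(t)=\int_\tau^t k(y/\eps)\d y=\eps K(t/\eps,\tau/\eps),
$$
so that a direct change of variable in \eqref{s2r9} yields $\|\hat k(t)\|_{\sigma-1}\leq \eps\ell$ and
$\int_t^{t+1}\|\hat k(y)\|_\sigma^2\d y\leq \eps^3(1+1/\eps)\ell^2\leq 2\eps^2\ell^2$, the $\eps$-gain surviving the blow-up $1/\eps$ of the integration window. Consider then the auxiliary solution $\hat V=(\hat v,\partial_t\hat v,\hat \zeta)$ of the same linear viscoelastic system as \eqref{LINEAReps}, but with forcing $\hat k(t)$ in place of $k(t/\eps)$ and again null initial data. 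Since $\mathbb A$ generates an exponentially stable contraction semigroup on every $\H^\sigma$ (see \cite{PAT}), a Gronwall-type argument on the natural energy functional at regularity level $\sigma$ gives
$$
\|\hat V(t)\|_{\H^\sigma}^2\leq c\int_\tau^t \e^{-\omega(t-y)}\|\hat k(y)\|_\sigma^2\d y\leq c\eps^2\ell^2.
$$

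The key identification is $V=\partial_t\hat V$. Indeed, differentiating the equation for $\hat V$ in $t$ yields the same linear system with forcing $(0,\partial_t\hat k,0)=(0,k(t/\eps),0)$. At $t=\tau$ the three components of $\partial_t\hat V(\tau)$ all vanish: the first is $\partial_t\hat v(\tau)=0$ by data; the second equals $\partial_{tt}\hat v(\tau)=-A\hat v(\tau)-\int_0^\infty\mu(s)A\hat\zeta^\tau(s)\d s+\hat k(\tau)$, which is zero because $\hat V(\tau)=0$ and $\hat k(\tau)=0$; the third equals $T\hat\zeta^\tau+\partial_t\hat v(\tau)=0$. Uniqueness for the linear Cauchy problem then gives $V=\partial_t\hat V$ on $[\tau,\infty)$, after the standard regularization/density argument.

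It remains to translate the bound on $\hat V$ in $\H^\sigma$ into a bound on $V=\partial_t\hat V$ in $\H^{\sigma-1}=\HH^\sigma\times\HH^{\sigma-1}\times\M^{\sigma-1}$. The first component is immediate: $\|v\|_\sigma=\|\partial_t\hat v\|_\sigma\leq\|\hat V\|_{\H^\sigma}\leq c\eps\ell$. The second follows from the equation $\partial_{tt}\hat v=-A\hat v-\int_0^\infty\mu(s)A\hat\zeta(s)\d s+\hat k$, which gives
$$
\|\partial_t v\|_{\sigma-1}=\|\partial_{tt}\hat v\|_{\sigma-1}\leq\|\hat v\|_{\sigma+1}+c\|\hat\zeta\|_{\M^\sigma}+\|\hat k\|_{\sigma-1}\leq c\eps\ell.
$$
For the memory component, instead of differentiating $\hat\zeta$, I would invoke the explicit representation formula for the solution of \eqref{LINEAReps}: since the initial history and $v(\tau)$ both vanish, $\zeta^t(s)=v(t)-v(t-s)$ for $0<s\leq t-\tau$ and $\zeta^t(s)=v(t)$ for $s>t-\tau$. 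Combined with the already-established pointwise bound $\|v(y)\|_\sigma\leq c\eps\ell$, this yields $\|\zeta^t(s)\|_\sigma\leq 2c\eps\ell$ and hence $\|\zeta^t\|_{\M^{\sigma-1}}\leq c\sqrt{\kappa_0}\,\eps\ell$. Putting the three estimates together gives the claim.

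The delicate point is the energy estimate for $\hat V$ at the generic regularity level $\H^\sigma$ for negative or positive $\sigma$: one must check that the usual multiplier arguments (and the auxiliary functionals of the type used in \S\ref{SDis}, adapted to the linear setting) still produce a differential inequality with dissipation controlling $\|\hat V\|_{\H^\sigma}^2$, in spite of the possible singularity of $\mu$ at $0$. Everything else — the change of variable for $\hat k$, the identification $V=\partial_t\hat V$, and the unpacking of the $\H^{\sigma-1}$ norm — is bookkeeping.
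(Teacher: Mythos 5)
Your proof is correct, and its core coincides with the paper's: both exploit the fact that the time-primitive of the solution solves the same linear system driven by $K_\eps(t)=\eps K(t/\eps,\tau/\eps)$, whose translation-bounded norms carry the factor $\eps$, then apply the exponential energy estimate of Lemma~\ref{s2lem1} at level $\H^\sigma$, and finally recover $v$ and $\pt v$ by differentiation and by comparison in the equation. Indeed your auxiliary solution $\hat V$ is, by uniqueness, exactly the paper's integrated vector $\tilde V(t)=\int_\tau^t V(y)\,\d y$; the paper just runs the identification in the opposite (and slightly safer) direction, verifying directly that $\tilde V$ solves the integrated system, which spares the differentiation-plus-regularization step you rightly flag. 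The one genuinely different step is the memory component: the paper multiplies the transport equation $\zeta_t=T\zeta+\pt v$ by $\zeta$ in $\M^{\sigma-1}$ and uses \eqref{K2} together with the Gronwall lemma, whereas you invoke the explicit representation formula $\zeta^t(s)=v(t)-v(t-s)$ for $s\leq t-\tau$ and $\zeta^t(s)=v(t)$ for $s>t-\tau$, combined with the pointwise bound $\|v\|_\sigma\leq c\ell\eps$ and the summability of $\mu$. Your route is more elementary and avoids \eqref{K2} at that stage (it is of course still needed for the energy estimate on $\hat V$); the paper's route avoids the representation formula and only needs a bound on $\pt v$ in $\HH^{\sigma-1}$ rather than a pointwise bound on $v$ in $\HH^{\sigma}$. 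The remaining bookkeeping --- the bounds $\|\hat k(t)\|_{\sigma-1}\leq\eps\ell$ and $\int_t^{t+1}\|\hat k(y)\|_\sigma^2\,\d y\leq 2\eps^2\ell^2$, and the unpacking of the three components of the $\H^{\sigma-1}$ norm --- matches the paper's computations.
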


\begin{remark}
Condition \eqref{s2r9} is satisfied, for instance,
if $k\in
L^\infty(\R;\HH^{\sigma-1})\cap L^1_{\rm loc}(\R;\HH^{\sigma})$
is a time periodic function of period $\Pi>0$
having zero mean, i.e
$$\int_{0}^{\Pi}k(y) \d y=0.$$
Other examples of quasiperiodic
and almost periodic in time functions satisfying \eqref{s2r9} can be found
in \cite{CV1a,CVbook}.
\end{remark}

The proof of the proposition requires a preliminary lemma.

\begin{lemma}
\label{s2lem1}
The unique solution $V(t)=(v(t),\partial _{t}v(t),\zeta^t)$ to the problem~\eqref{LINEAReps}
with $\eps=1$
fulfills the inequality
$$
\|V(t)\|_{\H^\sigma}^2\leq c\int_{\tau}^{t}{\e}^{-\omega (t-y)}\|k(y)\|_{\sigma }^2\d y,
$$
for every $t\geq\tau$ and some $\omega >0$
independent of the initial time $\tau$.
\end{lemma}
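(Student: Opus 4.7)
The plan is to combine the exponential stability of the underlying linear semigroup with the Duhamel representation formula. At $\eps=1$, problem~\eqref{LINEAReps} is the linear system
$$\ddt V(t) = {\mathbb A} V(t) + (0,k(t),0), \qquad V(\tau) = 0,$$
so $V(t) = \int_\tau^t S(t-y)(0,k(y),0)\,\d y$, where $S(t) = \e^{t{\mathbb A}}$ is the semigroup on $\H$ already recalled in Section~\ref{SUni}. The crucial ingredient is the exponential decay of $S(t)$ on every scale $\H^\sigma$: once one has $\|S(t)W\|_{\H^\sigma} \leq c\,\e^{-\omega t/2}\|W\|_{\H^\sigma}$ for some $\omega>0$, Cauchy--Schwarz applied to the Duhamel integral gives at once
$$
\|V(t)\|_{\H^\sigma}^2 \leq \bigg(c\int_\tau^t \e^{-\omega(t-y)/2}\|k(y)\|_\sigma\,\d y\bigg)^2 \leq c\int_\tau^t \e^{-\omega(t-y)}\|k(y)\|_\sigma^2\,\d y,
$$
which is the stated conclusion.

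To establish the semigroup bound at level $\sigma$, my plan is to repeat the Lyapunov construction of Section~\ref{SDis} at higher regularity, applied to the \emph{homogeneous linear} version of~\eqref{PROBLEM} (i.e.\ with $f\equiv 0$, $g^\eps\equiv 0$) and with the norms $\|\cdot\|_\sigma$, $\|\cdot\|_{\sigma+1}$, $\|\cdot\|_{\M^\sigma}$ replacing $\|\cdot\|$, $\|\cdot\|_1$, $\|\cdot\|_{\M}$. Since $A$ commutes with its fractional powers and $\mu$ depends only on the internal variable $s$, all the identities behind~\eqref{STIMALYA1} and~\eqref{STIMALYA2} transfer verbatim. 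Setting
$$
E_\sigma = \tfrac12\|V\|_{\H^\sigma}^2, \quad L_{1,\sigma} = -\tfrac{1}{\kappa_0}\int_0^\infty \mu_\varpi(s)\l \pt v,\zeta(s)\r_\sigma\,\d s, \quad L_{2,\sigma} = \l \pt v, v\r_\sigma,
$$
and $\Lambda_\oo^\sigma = E_\sigma + \oo(2L_{1,\sigma}+L_{2,\sigma})$ with $\oo>0$ small, I expect to arrive directly at
$$
\tfrac1c\|V\|_{\H^\sigma}^2 \leq \Lambda_\oo^\sigma \leq c\|V\|_{\H^\sigma}^2, \qquad \ddt \Lambda_\oo^\sigma + \omega \Lambda_\oo^\sigma \leq c\|k(t)\|_\sigma^2.
$$
The situation is in fact cleaner than in Section~\ref{SDis}: no $\sqrt{I}$ cross terms and no powers $[\Lambda_\oo^\sigma]^\beta$ arise, so Lemma~\ref{LEMMAPatonwall} is not needed and ordinary scalar Gronwall suffices.

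Integrating the last inequality from $\tau$ to $t$ with $\Lambda_\oo^\sigma(\tau) = 0$ and invoking the equivalence already yields the desired bound, making the Duhamel step above actually unnecessary. No genuine obstacle is foreseen: the only point to check is that every manipulation of Section~\ref{SDis} is read with $A^{\sigma/2}$ applied throughout, which is purely routine; the removal of $f$ and $g^\eps$ in the homogeneous problem eliminates exactly the terms that made Theorem~\ref{Th-EE} delicate.
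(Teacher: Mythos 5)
Your proposal is correct and follows essentially the same route as the paper: the authors likewise derive the differential inequality $\ddt \Lambda + \omega\Lambda \leq c\|k\|_\sigma^2$ for a functional equivalent to $\|V\|_{\H^\sigma}^2$ by repeating the argument of Theorem~\ref{Th-EE} with $f\equiv 0$ at regularity level $\sigma$, and then conclude by the classical Gronwall lemma. Your opening Duhamel detour is harmless but, as you yourself observe, redundant.
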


\begin{proof}
Existence and uniqueness of the linear problem follows by standard semigroup arguments,
which are applicable in any space $\H^\sigma$ (see e.g.\ \cite{PAT}).
Arguing as in the proof of Theorem \ref{Th-EE},
and using the fact that here $f\equiv 0$, it is not difficult to prove the differential inequality
$$
\ddt \|V\| _{\H^\sigma }^{2}+\omega \|V\| _{\H^\sigma}^{2}\leq c\|k\|_{\sigma }^2,
$$
for some $\omega>0$.
The desired result follows by the (classical) Gronwall lemma.
\end{proof}

\begin{proof}[Proof of Proposition \ref{propLINEAR}]
Without loss of generality, we may assume $\tau=0$.
Denoting
$$
\tilde v(t)=\int_{0}^{t}v(y)\d y,\qquad \tilde \zeta^t(s)=
\int_{0}^{t}\zeta^y(s)\d y,
$$
an integration of~\eqref{LINEAReps} in time yields
$$
\begin{cases}
\partial_{tt} \tilde v+A\tilde v
+\displaystyle\int_0^\infty \mu(s) A\tilde \zeta(s)\d s= K_\eps(t),\\
\tilde \zeta_t=T\tilde \zeta+\pt \tilde v,\\
\noalign{\vskip1.5mm}
(\tilde v(0),\pt \tilde v(0),\tilde \zeta^0)=(0,0,0),
\end{cases}
$$
where
$$K_{\eps}(t)=\int_{0}^{t}k\left(y/\eps\right) \d y=\eps K\left( t/\eps,0\right).$$
Then, we easily infer from~\eqref{s2r9} that
$$
\sup_{t\geq 0}
\left\{\|K_{\eps}(t)\|_{\sigma-1}^2+\int_{t}^{t+1}\|K_{\eps}(y)\|_{\sigma }^2\d y\right\}\leq c\ell^2\eps^2.
$$
Applying Lemma~\ref{s2lem1} to the system above we obtain
$$
\|\tilde v(t)\|_{\sigma +1}^2+\|\partial _{t}\tilde v(t)\|_{\sigma }^2+\|\tilde \zeta^t\|_{\M^\sigma}^2\leq
c\int_{0}^{t}\e^{-\omega (t-y)}\|K_{\eps }(y)\|_{\sigma}^2\d y
\leq c\ell^2\eps^2,
$$
where the last passage follows from the well-known inequality (see e.g.\ \cite{PPV})
$$
\sup_{t\geq 0}\int_0^t \e^{-\omega(t-s)}h(y)\d s
\leq  \frac{1}{1- \e^{-\omega}}\, \sup_{t\geq0}\int_t^{t+1}
h(y)\d y,
$$
valid for every nonnegative
locally summable function $h$
and every $\omega>0$.
In particular, we learn that
$$
\|v(t)\|_{\sigma }=\|\partial _{t}\tilde v(t)\|_{\sigma }\leq
c \ell\eps.
$$
Besides, by comparison in the equation
$$
\|\partial_{tt}\tilde v(t)\|_{\sigma-1}
\leq \|A \tilde v(t)\|_{\sigma-1}
+\int_0^\infty\mu(s)\|A\tilde \zeta^t(s)\|_{\sigma-1}\d s +\|K_{\eps }(t)\|_{\sigma -1}.
$$
But
$$
\|A \tilde v(t)\|_{\sigma -1}=\|\tilde v(t)\|_{\sigma +1}\leq c \ell\eps,$$
and
$$
\int_0^\infty\mu(s)\|A\tilde \zeta^t(s)\|_{\sigma-1}\d s
\leq c\|\tilde \zeta^t\|_{\M^\sigma}\leq c\ell\eps.
$$
Accordingly,
$$\|\pt v(t)\|_{\sigma-1}
=\|\partial_{tt}\tilde v(t)\|_{\sigma-1}\leq c \ell\eps.
$$
We are left to prove the estimate
$$\|\zeta^t\|_{\M^{\sigma-1}}\leq c \ell\eps.$$
To this end,
we multiply the second equation of the system
by $\zeta^t$ in $\M^{\sigma-1}$. Making use of \eqref{K2}, we get
$$\ddt \|\zeta^t\|^2_{\M^{\sigma-1}}+\delta \|\zeta^t\|^2_{\M^{\sigma-1}}\leq c\|\pt v(t)\|_{\sigma-1}^2\leq c\ell^2\eps^2,$$
and the claim follows from the Gronwall lemma.
\end{proof}
%%%%%%%%%%%%%%%%%%%%%%%%%%%%%%%%%%%%%%%%%%%%

%%%%%%%%%%%%%%%%%%%%%%%%%%%%%%%%%%%%%%%%%%%%
\section{Uniform Boundedness of the Global Attractors}
\label{SBdd}

\subsection{Statement of the result}
Estimate \eqref{sizea} provides a bound on the size of the attractors which, unless $\rho=0$, is not uniform
with respect to $\eps \in [0,1]$.
Here, as far as the more interesting case $\rho>0$ is concerned, we give
a sufficient condition in order for the family $\A^{\eps }$ to be uniformly bounded in $\H$.
Such a condition involves only the function $g_{1}$, which introduces singular oscillations in the external force.
Setting
\begin{equation*}
G_{1}(t,\tau)=\int_{\tau}^{t}g_{1}(y)\d y,\quad t\geq\tau,
\end{equation*}%
our main assumption reads
\begin{equation}
\sup_{t\geq\tau,\,\tau\in\R}
\left\{\|G_1(t,\tau)\|_{\vartheta-1}^2+\int_{t}^{t+1}\|G_1(y,\tau)\|_{\vartheta}^2\d y\right\}\leq \ell^2,
\label{s3r0}
\end{equation}%
for some $\ell\geq 0$,
where
$$
\vartheta=
\begin{cases}
1& \text{if}\quad 1\leq p\leq 2,\\
\noalign{\vskip1mm}
\displaystyle \frac{3(p-1)}{p+1}& \text{if}\quad 2<p<3.
\end{cases}
$$

\begin{theorem}
\label{s3the1}
Let \eqref{TrCPT} hold, and let $G_{1}$ satisfy \eqref{s3r0}.
Then, for every $\rho\in [0,1)$, the global attractors $\A^{\eps }$ are uniformly bounded in $\H$, i.e.\
$$
\sup_{\eps\in[0,1]}\| \A^{\eps }\| _{\H}<\infty.
$$
\end{theorem}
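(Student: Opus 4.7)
By Theorem~\ref{THMChar}, $\A^{\eps}$ is the union over $\hat g^\eps\in\textsf{H}(g^\eps)$ of the kernel sections of $S_{\hat g^\eps}(t,\tau)$, so it suffices to bound $\|U(t)\|_\H$ uniformly in $\eps\in(0,1]$ along any {\cbt} $U(\cdot)$ of some $S_{\hat g^\eps}(\cdot,\cdot)$, keeping in mind the a priori (non-uniform) bound $\Phi(U(t))\leq c\eps^{-2\rho}$ from~\eqref{sizea}. For any fixed $\tau\in\R$, let $V_\tau=(v,\pt v,\zeta)$ solve the linear auxiliary problem~\eqref{LINEAReps} with $k=\hat g_1$ and zero initial data at $\tau$. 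The condition~\eqref{s3r0} is stable under translations and descends to every element of $\textsf{H}(g_1)$, so Proposition~\ref{propLINEAR} applied with $\sigma=\vartheta$ yields
$$
\|V_\tau(t)\|_{\H^{\vartheta-1}}\leq c\ell\eps,\quad \forall t\geq\tau.
$$
Set $W=U-\eps^{-\rho}V_\tau$. Then $W=(w,\pt w,\xi)$ solves the analogue of~\eqref{PROBLEM} driven by the regular force $\hat g_0$ and with the \emph{shifted} nonlinearity $f(u)=f(w+\eps^{-\rho}v)$ in place of $f(w)$; the initial condition $W(\tau)=U(\tau)$ satisfies only the non-uniform bound $\|W(\tau)\|_\H\leq c\eps^{-\rho}$.

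\textbf{Energy estimate for $W$.} We rerun the proof of Theorem~\ref{Th-EE} on $W$, tracking the novel contribution $f(u)-f(w)$ at every place where the nonlinearity enters (main energy and the two auxiliary functionals $L_1$, $L_2$). The value $\vartheta=3(p-1)/(p+1)$ (or $\vartheta=1$ for $p\leq 2$) is dictated by the Sobolev embedding $\HH^\vartheta\subset L^{2(p+1)/(3-p)}$ in $\R^3$, which allows the H\"older split
$$
\int|u|^{2(p-1)}|\eps^{-\rho}v|^{2}\,\d x \leq \|u\|_{L^{p+1}}^{2(p-1)}\|\eps^{-\rho}v\|_{L^{2(p+1)/(3-p)}}^{2},
$$
pairing $u$ against the $L^{p+1}$-norm controlled by $\Phi(U)$, and $\eps^{-\rho}v$ against Proposition~\ref{propLINEAR}. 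Together with~\eqref{crescita} and analogous treatments of all the nonlinear terms in $L_1$ and $L_2$, this produces the pointwise bound
$$
\|f(u)-f(w)\|^2 \leq c\ell^2\eps^{2(1-\rho)}\bigl(1+\Phi(U)^{\gamma}+\Phi(W)^{\gamma}\bigr),\qquad \gamma=\tfrac{2(p-1)}{p+1}\in[0,1).
$$
Feeding the resulting differential inequalities into Lemma~\ref{LEMMAPatonwall} and sending $\tau\to-\infty$ along the {\cbt}—the initial term $\Q(\Phi(W(\tau)))\e^{-\omega(t-\tau)}$ is killed since $\Q$ grows only polynomially in $\|U(\tau)\|_\H\leq c\eps^{-\rho}$—we obtain
$$
\sup_{t\in\R}\Phi(W(t)) \leq c+c\ell^2\eps^{2(1-\rho)}\Bigl(1+\sup_{t\in\R}\Phi(U(t))^{\gamma}+\sup_{t\in\R}\Phi(W(t))^{\gamma}\Bigr).
$$

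\textbf{Closure and main obstacle.} Proposition~\ref{propLINEAR} and the embedding $\HH^\vartheta\subset L^{p+1}$ give $\Phi(\eps^{-\rho}V_\tau)\leq c\bigl(\eps^{2(1-\rho)}+\eps^{(p+1)(1-\rho)}\bigr)\leq C$ uniformly in $\eps\in(0,1]$, because $\rho<1$. A convexity inequality for $\Phi$ applied to $U=W+\eps^{-\rho}V_\tau$ then yields $M_\eps:=\sup_t\Phi(U(t))\leq C\bigl(1+\sup_t\Phi(W(t))\bigr)$, whence
$$
M_\eps \leq c+c\ell^2\eps^{2(1-\rho)}\bigl(1+M_\eps^{\gamma}\bigr).
$$
Since $\gamma<1$, Young's inequality absorbs $c\ell^2\eps^{2(1-\rho)}M_\eps^\gamma$ into the left-hand side uniformly in $\eps\in(0,1]$, giving $M_\eps\leq C$ independent of $\eps$; the case $\eps=1$ is already contained in Corollary~\ref{corco}, and the conclusion $\sup_\eps\|\A^\eps\|_\H<\infty$ follows. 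The main technical obstacle is the bookkeeping in the energy estimate for $W$: one must verify that \emph{every} occurrence of the nonlinearity in the counterparts of $L_1$ and $L_2$ admits a splitting with an exponent on $\Phi(U)$ strictly less than $1$, which is precisely what the sharp choice of $\vartheta$ secures. In the borderline range $p\nearrow 3$, $\rho\nearrow 1$ the absorption inequality above becomes tight and the closure is carried out through a finite recursion that at each step improves the effective exponent by a factor of $\eps^{2(1-\rho)(1-\gamma)}$.
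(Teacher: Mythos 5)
Your argument is correct in substance and rests on exactly the same pillars as the paper's proof: the decomposition $U=W+\eps^{-\rho}V_\tau$ through the auxiliary linear problem of Proposition~\ref{propLINEAR}, the H\"older--Sobolev splitting dictated by the exponent $\vartheta$, and the application of Lemma~\ref{LEMMAPatonwall} to the energy functionals built on $W$. The genuine difference lies in the closure. The paper avoids any self-referential inequality: it proves a one-step improvement (Lemma~\ref{key}: a bound $\Phi(\A^\eps)\leq Q\eps^{-2\gamma}$ upgrades to $\Phi(\A^\eps)\leq\widehat{Q}\eps^{-2\gamma m_\gamma}$) and iterates it along the explicit sequence of exponents $\rho\varkappa^n$ with $\varkappa=\frac{2(p-1)}{p+1}-\frac{1-\rho}{\rho}\in(0,1)$, reaching $m_\gamma=0$ after finitely many steps. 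You instead keep the unknown $M_\eps=\sup_{t}\Phi(U(t))$ on both sides, obtain $M_\eps\leq c+c\eps^{2(1-\rho)}M_\eps^{2(p-1)/(p+1)}$, and absorb the sublinear term; this is legitimate since $M_\eps$ is finite a priori by~\eqref{sizea} and since the conclusion of Lemma~\ref{LEMMAPatonwall} is linear in $M$ with $M$-independent constants, which is precisely how the paper itself uses it (e.g.\ to derive~\eqref{sizea}). Your route is shorter and makes the recursion unnecessary; the paper's iteration buys the safety of only ever inserting already-established constants into the Gronwall lemma. Two points of your write-up need tightening. First, $W=W_\tau$ depends on $\tau$ and is defined only on $[\tau,\infty)$, so $\sup_{t\in\R}\Phi(W(t))$ is not meaningful, and $\sup_{t\geq\tau}\Phi(W_\tau(t))$ retains the non-uniform transient $\Phi(W_\tau(\tau))=\Phi(U(\tau))$: the self-referential bound must be routed through the $\tau$-independent quantity $M_\eps$ (via $\|w\|_{L^{p+1}}\leq\|u\|_{L^{p+1}}+\|v\|_{L^{p+1}}\leq M_\eps^{1/(p+1)}+c$ pointwise in $t$), after which one applies the Gronwall lemma, lets $\tau\to-\infty$ at fixed $t$, and only then takes the supremum over $t$. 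Second, the closing remark about a ``finite recursion in the borderline range'' is superfluous: for fixed $p<3$ and $\rho<1$ the exponent $\frac{2(p-1)}{p+1}$ is a fixed number strictly below $1$ and the absorption closes in one step, the constants merely degenerating as $p\to3$ or $\rho\to1$.
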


Actually, as it will be clear in the upcoming proof, in the Lipschitz situation $p=1$
the result extends to the limiting case $\rho=1$ as well.

\subsection{Proof of Theorem \ref{s3the1}}
Till the end of the section, $\rho\in(0,1)$ and $p\in[1,3)$ are understood to be fixed.
The key argument is in the following lemma.

\begin{lemma}
\label{key}
Let $Q>0$ and $\gamma>0$ be given constants.
Assume to have the bound
$$\Phi(\A^\eps)\leq \frac{Q}{\eps^{2\gamma}},\quad\forall \eps\in (0,1].$$
Then, there exists $\widehat Q>0$ such that
$$\Phi(\A^\eps)\leq \frac{\widehat Q}{\eps^{2\gamma m_\gamma}},$$
where
$$m_\gamma=\max\bigg\{0,\frac{2(p-1)}{p+1}-\frac{1-\rho}{\gamma}\bigg\}.$$
In particular, if
$m_\gamma=0$, the family $\A^\eps$ is uniformly bounded in $\H$.
\end{lemma}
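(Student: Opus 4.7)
The strategy is to decompose any complete bounded trajectory $U$ on $\A^\eps$ into an oscillating linear part $V^\tau$, isolated by Proposition~\ref{propLINEAR}, plus a remainder $W^\tau$ solving the full viscoelastic system but with a non-oscillating forcing; the improvement in the attractor bound will then come from the energy estimate for $W^\tau$, in which the nonlinear coupling with $V^\tau$ is handled perturbatively.

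Fix $t\in\R$ and let $U=(u,\pt u,\eta)$ be a CBT of $S_\eps(t,\tau)$. By the standing hypothesis and Theorem~\ref{THMChar}, we have $\Phi(U(s))\le Q\eps^{-2\gamma}$ for every $s\in\R$. For each $\tau<t$, let $V^\tau=(v^\tau,\pt v^\tau,\zeta^\tau)$ solve the linear memory system with forcing $\eps^{-\rho}g_1(t/\eps)$ and vanishing initial data at $\tau$. Assumption~\eqref{s3r0} is precisely~\eqref{s2r9} of Proposition~\ref{propLINEAR} applied to the rescaled forcing with $\sigma=\vartheta$, whence
$$
\|V^\tau(t)\|_{\H^{\vartheta-1}}\le c\eps^{1-\rho}.
$$
Since $\vartheta\ge 1$ by definition, the embeddings $\H^{\vartheta-1}\hookrightarrow\H$ and $\HH^\vartheta\hookrightarrow L^{p+1}(\Omega)$ give, uniformly in $\tau<t$,
$$
\Phi(V^\tau(t))\le c\eps^{2(1-\rho)}.
$$

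Set $W^\tau=U-V^\tau$. A direct subtraction shows that $W^\tau$ solves the viscoelastic equations with nonlinearity $f(w^\tau)$ and non-oscillating forcing $g_0+h^\tau$, where the defect $h^\tau=f(w^\tau)-f(u)$ must be controlled. By the growth condition~\eqref{crescita}, $|h^\tau|\le c(1+|u|^{p-1}+|w^\tau|^{p-1})|v^\tau|$, and H\"older's inequality with the conjugate pair $\bigl(\tfrac{p+1}{p-1},\tfrac{2(p+1)}{3-p}\bigr)$, combined with the Sobolev embedding $\HH^\vartheta\hookrightarrow L^{2(p+1)/(3-p)}(\Omega)$ that holds by the very choice of $\vartheta$, yield
$$
\|h^\tau(s)\|\le c\bigl(1+\|u(s)\|_{L^{p+1}}^{p-1}+\|w^\tau(s)\|_{L^{p+1}}^{p-1}\bigr)\|v^\tau(s)\|_\vartheta.
$$
The a priori bound $\Phi(U(s))\le Q\eps^{-2\gamma}$ together with $w^\tau=u-v^\tau$ controls the $L^{p+1}$-norms by $c\eps^{-2\gamma/(p+1)}$, so that
$$
\|h^\tau(s)\|\le c\,\eps^{1-\rho-2\gamma(p-1)/(p+1)},\qquad\text{uniformly in }s\ge\tau.
$$

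The energy estimate underlying Theorem~\ref{Th-EE} applies verbatim to $W^\tau$, being insensitive to the precise form of the translation-bounded forcing. With $\|W^\tau(\tau)\|_\H=\|U(\tau)\|_\H\le c\eps^{-\gamma}$, we obtain
$$
\Phi(W^\tau(t))\le\Q\bigl(c\eps^{-\gamma}\bigr)\e^{-\omega(t-\tau)}+c+c\,\eps^{2(1-\rho-2\gamma(p-1)/(p+1))}.
$$
At fixed $\eps$ we let $\tau\to-\infty$ to annihilate the first summand, and the elementary bound $\Phi(U(t))\le c\bigl(\Phi(V^\tau(t))+\Phi(W^\tau(t))\bigr)$ produces
$$
\Phi(U(t))\le c+c\,\eps^{2(1-\rho-2\gamma(p-1)/(p+1))}.
$$
If $m_\gamma=0$ the right-hand side is $O(1)$, while if $m_\gamma>0$ the exponent is negative and equals $-2\gamma m_\gamma$, so in every case $\Phi(U(t))\le\widehat Q\,\eps^{-2\gamma m_\gamma}$ for some $\widehat Q$ independent of $\eps$. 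Since $\A^\eps$ is covered by the values $U(t)$ of complete bounded trajectories via Theorem~\ref{THMChar}, the claim follows. The main obstacle is the calibration of Sobolev exponents that underlies the very choice of $\vartheta$: only with that regularity does the H\"older-Sobolev estimate of $h^\tau$ leave the factor $2(p-1)/(p+1)<1$ multiplying $\gamma$, which is precisely what makes the recursive improvement $\gamma\mapsto\gamma m_\gamma$ strict and guarantees termination at $m=0$ for every $p<3$.
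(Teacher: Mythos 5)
Your proposal is correct and follows essentially the same route as the paper's proof: the decomposition of a complete bounded trajectory into the solution $V$ of the auxiliary linear problem (controlled via Proposition~\ref{propLINEAR}) plus a remainder $W$, the H\"older--Sobolev estimate of the nonlinear defect with exponents $\frac{p+1}{p-1}$ and $\frac{2(p+1)}{3-p}$, the use of the a priori bound $\Phi(U)\leq Q\eps^{-2\gamma}$ to control $\|w\|_{L^{p+1}}^{p-1}$, the Gronwall-type energy estimate for $W$, and the limit $\tau\to-\infty$. The only cosmetic differences are that you work with the $L^2$ norm of the defect rather than its square and quote the energy estimate of Theorem~\ref{Th-EE} as a black box where the paper re-derives the differential inequality for $\Lambda_\oo$ explicitly; neither affects correctness.
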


\begin{remark}
\label{remkey}
Note that if $p=1$ then $m_\gamma=0$ for every $\gamma>0$
(even if $\rho=1$). Instead, if $p>1$, the conclusion
$m_\gamma=0$ holds whenever
\begin{equation}
\label{gammastar}
\gamma\leq \gamma_\star:=\frac{(1-\rho)(p+1)}{2(p-1)}.
\end{equation}
\end{remark}

\begin{proof}[Proof of Lemma \ref{key}]
Let $\eps\in(0,1]$ be fixed, and let $U(t)=(u(t),\pt u(t),\eta^t)$ be any {\cbt} lying on the attractor $\A^\eps$.
Thus, $U(t)$ solves for all times problem~\eqref{hatPROBLEM} for some
$$\hat g^\eps(t)=\hat g_{0}(t)+\eps ^{-\rho }\hat g_{1}(t/\eps)\in \textsf{H}(g^\eps).
$$
In particular (see e.g.\ \cite{CPV-aver}), the function
$$
\hat G_{1}(t,\tau)=\int_{\tau}^{t}\hat g_{1}(y)\d y,\quad t\geq\tau,
$$
fulfills the analogue of~\eqref{s3r0}.
In the light of the assumptions, the characterization
Theorem~\ref{THMChar} implies that
\begin{equation}
\label{unib}
\Phi(U(t))\leq \frac{Q}{\eps^{2\gamma}},\quad\forall t\in\R.
\end{equation}
We divide the proof in a number of steps.
In what follows, $\tau\in\R$ will be an arbitrary initial time.

\subsection*{Step I}
For $t>\tau$, let $V(t)=(v(t),\partial _{t}v(t),\zeta^t)$ be the solution
to the auxiliary problem
\begin{equation}
\label{VVV}
\begin{cases}
\partial_{tt} v+Av
+\displaystyle\int_0^\infty \mu(s) A\zeta(s)\d s= \eps^{-\rho}\hat g_1(t/\eps),\\
\zeta_t=T\zeta+\pt v,
\end{cases}
\end{equation}
with null initial datum
$$V(\tau)=0.$$
On account of Proposition~\ref{propLINEAR}, we have
the inequality
\begin{equation}
\label{vpicpic}
\|V(t)\|_{\H^{\vartheta-1}}\leq c\ell \eps ^{1-\rho}.
\end{equation}
Then, from the Sobolev embedding theorem
$$\HH^{\vartheta}\subset \HH^{3(p-1)/(p+1)}\subset L^{2(p+1)/(3-p)}(\Omega ),
$$
we deduce the estimate
\begin{equation}
\label{vpic}
\| v\|_{L^{2(p+1)/(3-p)}}\leq c \| v\|_\vartheta\leq c\ell \eps^{1-\rho}.
\end{equation}

\subsection*{Step II} The difference
$$W(t)=(w(t),\pt w(t),\xi^t)=U(t)-V(t)$$
fulfills the system
$$
\begin{cases}
\partial_{tt} w+Aw
+\displaystyle\int_0^\infty \mu(s) A\xi(s)\d s +f(w)=-[
f(w+v)-f(w)] +\hat g_{0}(t),\\
\xi_t=T\xi+\pt w,
\end{cases}
$$
with initial condition
$$
W(\tau)=U(\tau).$$
Then, we define the family of functionals $\Lambda_\oo$
as in the proof of Theorem~\ref{Th-EE}, but using now $W(t)$ in place of $U(t)$.
Recasting {\it verbatim} that proof, we draw
the analogue of~\eqref{nore}, i.e.\
\begin{equation}
\label{nore2}
\frac1c\Phi(W(t))\leq
\Lambda_\oo(t)\leq
c\Phi(W(t))+c,\quad c\geq 1,
\end{equation}
along with the family of differential inequalities
$$\ddt\Lambda_\oo+\oo\Lambda_\oo\leq
c\oo^2\Lambda_\oo^{\frac{2p}{p+1}}+\frac{c}{\oo}\|f(w+v)-f(w)\|^2+\frac{c}{\oo}\|\hat g_0\|^2+c,$$
for all $\oo>0$ small.

\subsection*{Step III}
We now estimate the term in the right-hand side above. From \eqref{crescita},
$$
|f(w+v)-f(w)|\leq c\left(1+|w|^{p-1}+|v|^{p-1}\right)|v|.
$$
Therefore, we have
the control
$$
\|f(w+v)-f(w)\|^{2}
\leq c\int_{\Omega }|w(x,\cdot )|^{2(p-1)}|v(x,\cdot
)|^{2}\d x+ c\| v\| _{L^{2p}}^{2p}+c\|v\|^2.
$$
Since $0<2(p-1)<p+1$, setting
$$
p_1=\frac{p+1}{2(p-1)}>1,\qquad p_{2}=\frac{p+1}{3-p}>1,
$$
we infer from the H\"{o}lder inequality with exponents
$(p_{1},p_{2})$ together with~\eqref{vpic} that
$$\int_{\Omega }|w(x,\cdot )|^{2(p-1)}|v(x,\cdot )|^{2}\d x \leq \|
w\| _{L^{p+1}}^{2(p-1)}\| v\| _{L^{2(p+1)/(3-p)}}^{2}
\leq c \eps^{2(1-\rho)}\| w\| _{L^{p+1}}^{2(p-1)}.
$$
As $2p<2(p+1)/(3-p)$, by a further use of \eqref{vpic} we also get
$$
\| v\| _{L^{2p}}^{2p}+\|v\|^2
\leq c\left(\| v\| _{L^{{2(p+1)/(3-p)}}}^{2p}+\| v\| _{L^{{2(p+1)/(3-p)}}}^{2}\right)\leq c.
$$
Summarizing,
$$
\|f(w+v)-f(w)\|^{2}\leq c \eps^{2(1-\rho)}\| w\| _{L^{p+1}}^{2(p-1)}
+c.
$$
Hence, denoting
$$h_\eps(t)=\eps^{2(1-\rho)}\| w(t)\| _{L^{p+1}}^{2(p-1)},
$$
we end up with
$$
\ddt\Lambda_\oo+\oo\Lambda_\oo\leq
c\oo^2\Lambda_\oo^{\frac{2p}{p+1}}+\frac{c}{\oo}h_\eps+\frac{c}{\oo}\|\hat g_0\|^2+\frac{c}{\oo}.
$$

\subsection*{Step IV} In the light of \eqref{unib} and \eqref{vpic}, it is clear that
$$\|w(t)\|_{L^{p+1}}^{p+1}\leq c\eps^{-2\gamma}.$$
In turn,
$$h_\eps\leq
c\big[\eps^{-2\gamma}\big]^{\frac{2(p-1)}{p+1}-\frac{1-\rho}{\gamma}}.$$
Thus, by the very definition of $m_\gamma$,
$$h_\eps(t)\leq c \eps^{-2\gamma m_\gamma}.$$
Accordingly, we arrive at
$$\ddt\Lambda_\oo+\oo\Lambda_\oo\leq
c\oo^2\Lambda_\oo^{\frac{2p}{p+1}}+\frac{c}{\oo}\eps^{-2\gamma m_\gamma}+\frac{c}{\oo}\|\hat g_0\|^2.$$
On account of \eqref{nore2},
an application of the Gronwall Lemma \ref{LEMMAPatonwall} yields
$$\Phi(W(t))\leq c_\eps\e^{-\omega(t-\tau)}+ c\eps^{-2\gamma m_\gamma}$$
for some $\omega>0$ and every $t\geq \tau$, where $c_\eps>0$ is a constant depending only on the size of the attractor $\A^\eps$.
It is worth noting that neither $c_\eps$ nor $c$ depend on the chosen initial time $\tau$.
Letting $\tau\to -\infty$, we finally obtain the uniform-in-time estimate
$$\Phi(W(t))\leq c \eps^{-2\gamma m_\gamma}.$$
Since $\Phi(V(t))\leq c$,
we get by comparison
$$\Phi(U(t))\leq c \eps^{-2\gamma m_\gamma}.$$
Recalling that $U(t)$ is an arbitrary {\cbt}, we are done.
\end{proof}

\begin{proof}[Conclusion of the proof of Theorem \ref{s3the1}]
Since $\A^0$ is bounded in $\H$, let us restrict our attention to the case $\eps>0$.
We know from \eqref{sizea} that
$$\Phi(\A^\eps)\leq \frac{Q}{\eps^{2\rho}},\quad\forall \eps\in (0,1].$$
Then, by an application of Lemma \ref{key} with $\gamma=\rho$, the thesis is trivially true whenever $\rho\leq \gamma_\star$,
which is the same as saying that
$$\rho\leq \rho_\star:=\frac{p+1}{3p-1}.$$
This concludes the proof when $p=1$, where $\rho_\star=1$. Note that,
in this case, the result holds also for $\rho=1$.

\smallskip
\noindent
Conversely, for $p>1$, we have to discuss those values of $\rho$ such that
$\rho_\star<\rho$.
Define
$$
\varkappa=\frac{2(p-1)}{p+1}-\frac{1-\rho}{\rho}.
$$
Note that
$$0<\varkappa<1$$
for every $p\in (1,3)$ and $\rho\in (\rho_\star,1)$.
Indeed, since $\rho>\rho_\star$, we have
$$\varkappa>\frac{2(p-1)}{p+1}-\frac{1}{\rho_\star}+1=0.$$
On the other hand,
$$\varkappa<1\quad\Leftrightarrow\quad
\rho<\frac{p+1}{2(p-1)},$$
the latter being true for every $\rho\in [0,1)$, as $p<3$ implies that the right-hand side is greater than 1.
We now prove by induction that for every $n\in\N$ there exists $Q_n>0$ such that
$$
\Phi(\A^\eps)\leq \frac{Q_n}{\eps^{2\rho\varkappa^n}},\quad\forall\eps\in(0,1].
$$
The case $n=0$ is already known. Hence, it is enough showing the implication
$$\Phi(\A^\eps)\leq \frac{Q_n}{\eps^{2\rho\varkappa^n}}
\quad \Rightarrow\quad
\Phi(\A^\eps)\leq \frac{Q_{n+1}}{\eps^{2\rho\varkappa^{n+1}}}.$$
Indeed, Lemma \ref{key} with $\gamma=\rho\varkappa^n$ yields
$$\Phi(\A^\eps)\leq \frac{{\widehat Q}_n}{\eps^{2\rho{\varkappa^{n}} m_\gamma}}, $$
with
$$m_\gamma=\max\bigg\{0,\frac{2(p-1)}{p+1}-\frac{1-\rho}{\rho\varkappa^n}\bigg\}.$$
On the other hand, since $0<\varkappa<1$, it is apparent that
$$\frac{2(p-1)}{p+1}-\frac{1-\rho}{\rho\varkappa^n}<\varkappa.
$$
Hence
$$\rho{\varkappa^{n}} m_\gamma< \rho\varkappa^{n+1},$$
and the inductive claim follows by setting $Q_{n+1}={\widehat Q}_n$.
At this point, since
$$\lim_{n\to\infty} \rho\varkappa^n= 0,$$
up to choosing $n$ large enough such that
$$\rho\varkappa^n\leq \gamma_\star,$$
an application of Lemma \ref{key} with $\gamma=\rho\varkappa^n$ and $Q=Q_n$ provides
the desired uniform bound.
\end{proof}
%%%%%%%%%%%%%%%%%%%%%%%%%%%%%%%%%%%%%%%%%%%%

%%%%%%%%%%%%%%%%%%%%%%%%%%%%%%%%%%%%%%%%%%%%
\section{Convergence of the Global Attractors}
\label{SCon}

\noindent
We finally establish the upper semicontinuity of the attractors as $\eps\to 0$.

\begin{theorem}
\label{s4the1}
Let \eqref{TrCPT} hold, and let $G_{1}$ satisfy \eqref{s3r0}.
Then, for every $\rho\in [0,1)$, the global attractors
$\A^{\eps }$ converge
to $\A^{0}$ with respect to the Hausdorff semidistance in
$\H$ as $\eps \to0$, i.e.\
$$
\lim_{\eps\to 0}\,
\mathrm{dist}_{\H}\big(\A^{\eps },\A^{0}\big)=0.
$$
\end{theorem}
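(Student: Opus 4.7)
The strategy is contradiction. Suppose the conclusion fails, so that there exist $\delta>0$, a sequence $\eps_n\to 0^+$, and $U_n\in\A^{\eps_n}$ with $\dist_{\H}(U_n,\A^0)\geq\delta$. By Theorem~\ref{s3the1}, $\sup_n\|U_n\|_\H<\infty$, and by the characterization Theorem~\ref{THMChar}, each $U_n=U^n(0)$ for some {\cbt} $U^n:\R\to\H$ of $S_{\hat g^{\eps_n}}$ with $\hat g^{\eps_n}(t)=\hat g_0^n(t)+\eps_n^{-\rho}\hat g_1^n(t/\eps_n)$ and $\hat g_\imath^n\in\textsf{H}(g_\imath)$; applying Theorem~\ref{s3the1} to the {\cbt} $U^n$ itself yields $\sup_n\sup_{t\in\R}\|U^n(t)\|_\H<\infty$.

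On an arbitrary interval $[-T,0]$ I would mimic the splitting used in the proof of Theorem~\ref{s3the1}: write $U^n=V^n+W^n$, where $V^n$ solves the linear auxiliary problem \eqref{LINEAReps} driven by $k=\eps_n^{-\rho}\hat g_1^n$ with $V^n(-T)=0$. Since $\hat G_1^n$ inherits from $g_1$ the bound \eqref{s3r0}, Proposition~\ref{propLINEAR} yields
\[
\|V^n(t)\|_{\H^{\vartheta-1}}\leq c\ell\,\eps_n^{1-\rho},\quad t\in[-T,0].
\]
Because $\vartheta\geq 1$, the embedding $\H^{\vartheta-1}\hookrightarrow\H$ forces $V^n\to 0$ in $C([-T,0];\H)$. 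The remainder $W^n=(w^n,\pt w^n,\xi^n)$ is uniformly bounded in $L^\infty([-T,0];\H)$ and satisfies
\[
\pt^2 w^n+Aw^n+\int_0^\infty\mu(s)A\xi^n(s)\,\d s+f(w^n)=\hat g_0^n(t)-\big[f(w^n+v^n)-f(w^n)\big];
\]
by \eqref{crescita} and the Sobolev embedding $\HH^1\hookrightarrow L^6(\Omega)$, the bracketed perturbation tends to zero in $L^2([-T,0];L^{6/5}(\Omega))\hookrightarrow L^2([-T,0];\HH^{-1})$.

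Using translation compactness of $g_0$, extract a subsequence such that $\hat g_0^n\to g_0^\ast\in\textsf{H}(g_0)$ in $L^2_{\rm loc}(\R;\HH)$. The critical step is to upgrade this to $W^n\to W$ \emph{strongly} in $C([-T,0];\H)$ for some limit $W$; for this I would reuse the Duhamel/splitting compactness argument of the proof of Theorem~\ref{s1pro1}, based on the exponentially stable linear semigroup $S(t)$ from \cite{PAT}, together with the compact Sobolev embeddings available thanks to the subcubic range $p<3$. The limit $W$ then solves the averaged system driven by $g_0^\ast$ on $[-T,0]$, and a standard diagonal procedure over $T\to\infty$ produces a global {\cbt} of $S_{g_0^\ast}$. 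By Theorem~\ref{THMChar} applied at $\eps=0$, $W(0)\in\mathbb K_{g_0^\ast}(0)\subset\A^0$. Since $U_n=V^n(0)+W^n(0)\to W(0)$ in $\H$, we contradict $\dist_{\H}(U_n,\A^0)\geq\delta$ and conclude the proof.

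The main obstacle will be precisely the strong convergence of $W^n(0)$ in $\H$: the dissipation of \eqref{BASE} is entirely memory-based, so no parabolic-type smoothing is at hand, and the compactness has to be teased out of the exponential decay of the linear memory semigroup combined with the compact embedding $\HH^1\subset L^{p+1}(\Omega)$. This is the very mechanism that already made the proof of Theorem~\ref{s1pro1} delicate; here it has to be carried out uniformly in $n$ and combined with the vanishing of the oscillating component. The latter step is where the restriction $\rho<1$ genuinely enters, through Proposition~\ref{propLINEAR} and the uniform-in-$\eps$ attractor bound of Theorem~\ref{s3the1}.
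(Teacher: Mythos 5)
Your strategy is genuinely different from the paper's, and it contains a gap at exactly the point you flag as ``the main obstacle''. You argue by contradiction and try to extract, from a sequence of complete bounded trajectories $U^n$ on $\A^{\eps_n}$, a strongly convergent subsequence whose limit is a \cbt{} of the averaged process. The entire weight of the argument then rests on the claim $W^n\to W$ strongly in $\C([-T,0];\H)$, which you do not prove but only propose to obtain by ``reusing'' the Duhamel splitting from the proof of Theorem~\ref{s1pro1}. That splitting establishes precompactness of the set $\mathcal K_{t,\tau}$ for a \emph{fixed} $\eps$ and a fixed hull; here you would need a compactness statement uniform over the varying sequence $\eps_n$, strong enough to pass to the limit in the nonlinear term $f(w^n)$ on the whole interval, plus a consistency/diagonal argument over $T\to\infty$ identifying the limit as a complete bounded trajectory of $S_{g_0^\ast}$. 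None of this is automatic for a memory-damped equation with no instantaneous friction, and as written the proof is incomplete. (The surrounding scaffolding --- the uniform bound from Theorem~\ref{s3the1}, the use of Theorem~\ref{THMChar}, the splitting $U^n=V^n+W^n$ with Proposition~\ref{propLINEAR} giving $V^n\to 0$ --- is correct and matches ingredients the paper also uses.)

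The paper avoids compactness entirely. Its key step is Lemma~\ref{s4pro1}: for any \cbt{} $U_\eps$ on $\A^\eps$ and any $\tau$, the averaged solution $U_0(t)=S_{\hat g^0}(t,\tau)U_\eps(\tau)$ issuing from the \emph{same} initial value satisfies
$$
\|U_\eps(t)-U_0(t)\|_\H\leq c\,\eps^{1-\rho}\e^{c(t-\tau)},\quad t\geq\tau,
$$
proved by subtracting the auxiliary linear solution $V$ of \eqref{VVV} and running a plain Gronwall estimate on the remainder, using the uniform attractor bound and \eqref{crescita} to get $\|f(u_\eps)-f(u_0)\|\leq c\|\bar u\|_1$. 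One then evaluates at $t=0$ with $\tau\leq 0$, uses the uniform attraction of $\A^0$ (uniform also over $\hat g^0\in\textsf{H}(g^0)$, Remark~\ref{miii}) to choose $\tau(\nu)$ independent of $\eps$, and sends first $\eps\to0$ and then $\nu\to0$. This ``finite-time quantitative tracking plus uniform attraction'' scheme gives upper semicontinuity without ever needing to extract a convergent subsequence of trajectories. If you want to keep your compactness route you must actually supply the uniform-in-$n$ asymptotic compactness and the passage to the limit in the equation; otherwise I recommend replacing the core of your argument with the direct comparison estimate above.
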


In order to prove the theorem, we make a comparison between some particular
solutions to~\eqref{hatPROBLEM} with $\eps>0$ and those to~\eqref{hatPROBLEM} with $\eps=0$,
sharing the same value at a given time $\tau\in\R$.
For a given $\eps>0$, let us take any {\cbt}
$$U_\eps(t)=(u_\eps(t),\partial_t {u}_\eps(t), {\eta}_\eps^t)$$
of the process $S_{\hat g^\eps}(t,\tau)$, for some
$$\hat g^\eps(t)=\hat g_{0}(t)+\eps ^{-\rho }\hat g_{1}(t/\eps)\in \textsf{H}(g^\eps).$$
Then, for an arbitrarily fixed $\tau\in\R$, consider the solution (corresponding to $\eps=0$)
$$U_0(t)=S_{\hat g^0}(t,\tau)U_\eps(\tau)=(u_0(t),\partial_t {u}_0(t), {\eta}_0^t).$$
Due to Theorem~\ref{s3the1}, along with the estimate of Theorem~\ref{Th-EE} to handle the case $\eps=0$, we have the uniform bound
\begin{equation}
\label{s4r9a}
\sup_{\eps\in[0,1]}\,\|U_{\eps}(t)\|_{\H}\leq c,\quad\forall t\geq \tau\in\R.
\end{equation}
Next, we define the deviation
$$
\bar{U}(t)=U_{\eps}(t)-U_{0}(t)=(\bar{u}(t),\partial_t \bar{u}(t), \bar{\eta}^t).
$$

\begin{lemma}
\label{s4pro1}
We have the estimate
$$
\|\bar{U} (t)\| _{\H}
\leq c\eps ^{1-\rho}\e^{c(t-\tau )},\quad\forall
t\geq \tau.
$$
Here, $c>0$ is independent of $\eps$, $\tau$, $\hat g^\eps$ and of the choice of $U_\eps(t)$.
\end{lemma}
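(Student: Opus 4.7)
My plan is to reuse the linear/nonlinear splitting already exploited in the proof of Lemma~\ref{key}. First, let $V(t)=(v,\pt v,\zeta^t)$ be the solution of the auxiliary linear problem \eqref{VVV} driven by $\eps^{-\rho}\hat g_1(t/\eps)$ with null data at time $\tau$. As noted in the proof of Lemma~\ref{key}, the primitive $\hat G_1$ inherits the analogue of~\eqref{s3r0}, so Proposition~\ref{propLINEAR} applies and yields $\|V(t)\|_{\H^{\vartheta-1}}\le c\eps^{1-\rho}$. Since $\vartheta\ge 1$, the continuous embedding $\H^{\vartheta-1}\subset\H$ gives both $\|V(t)\|_\H\le c\eps^{1-\rho}$ and $\|v(t)\|_1\le c\eps^{1-\rho}$.

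Then I would set $\widetilde W(t)=\bar U(t)-V(t)=(\widetilde w,\pt\widetilde w,\widetilde\xi)$. Since $\bar U(\tau)=V(\tau)=0$, we have $\widetilde W(\tau)=0$; and subtracting \eqref{VVV} from the equation for $\bar U$ — itself obtained by differencing \eqref{hatPROBLEM} at the levels $\hat g^\eps$ and $\hat g^0$ — the singularly oscillating term disappears, leaving
\begin{equation*}
\begin{cases}
\partial_{tt}\widetilde w+A\widetilde w+\int_0^\infty\mu(s)A\widetilde\xi(s)\,\d s=-[f(u_\eps)-f(u_0)],\\
\pt\widetilde\xi=T\widetilde\xi+\pt\widetilde w,\qquad \widetilde W(\tau)=0.
\end{cases}
\end{equation*}
Multiplying the first equation by $\pt\widetilde w$ in $\HH$ and invoking \eqref{K2} for the memory part produces the basic energy bound $\ddt\|\widetilde W\|_\H^2+\delta\|\widetilde\xi\|_\M^2\le 2|\l f(u_\eps)-f(u_0),\pt\widetilde w\r|$. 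By the mean value theorem, the growth condition \eqref{crescita}, H\"older's inequality, and the Sobolev embedding $\HH^1\subset L^6(\Omega)$, together with the uniform bound \eqref{s4r9a}, one estimates
$$\|f(u_\eps)-f(u_0)\|\le c\bigl(1+\|u_\eps\|_1^{p-1}+\|u_0\|_1^{p-1}\bigr)\|\bar u\|_1\le c\|\bar u\|_1\le c(\|\widetilde w\|_1+\|v\|_1).$$
Young's inequality combined with the bound on $\|v\|_1$ from Step~I then gives $\ddt\|\widetilde W\|_\H^2\le c\|\widetilde W\|_\H^2+c\eps^{2(1-\rho)}$, and the classical Gronwall lemma (starting from $\widetilde W(\tau)=0$) yields $\|\widetilde W(t)\|_\H\le c\eps^{1-\rho}\e^{c(t-\tau)}$. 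A triangle inequality closes the proof.

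The main obstacle — and the reason why a direct energy estimate on $\bar U$ would fail — is that $\eps^{-\rho}\hat g_1(t/\eps)$ has amplitude blowing up as $\eps\to 0$: pairing it with $\pt\bar u$ directly produces at best a factor of $\eps^{-\rho}$, not the required $\eps^{1-\rho}$. The splitting $\bar U=V+\widetilde W$ precisely transfers the oscillation into the primitive $\hat G_1$, where smallness is gained via Proposition~\ref{propLINEAR}, leaving a residual system whose forcing is only the nonlinear defect $f(u_\eps)-f(u_0)$, which is locally Lipschitz on the $\H$-bounded trajectories supplied by \eqref{s4r9a}. The only delicate technical point is verifying that this Lipschitz control can be formulated in terms of $\|\bar u\|_1$ alone, without requiring regularity above $\HH^1$, a fact which is ultimately guaranteed by the subcubic growth hypothesis on $f$.
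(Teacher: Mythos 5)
Your proposal is correct and follows essentially the same route as the paper: the same splitting $\bar U=V+\widetilde W$ with $V$ solving the auxiliary problem \eqref{VVV}, the bound $\|V(t)\|_\H\le c\eps^{1-\rho}$ from Proposition~\ref{propLINEAR} (using $\vartheta\ge 1$), the Lipschitz estimate $\|f(u_\eps)-f(u_0)\|\le c\|\bar u\|_1$ via \eqref{crescita} and \eqref{s4r9a}, and the classical Gronwall lemma from $\widetilde W(\tau)=0$. (Incidentally, your version of the residual system, without the spurious $f(w)$ term, is the correct one.)
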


\begin{proof}
Let $V(t)=(v(t),\partial _{t}v(t),\zeta^t)$ be the solution to the auxiliary problem~\eqref{VVV} with null initial datum
$V(\tau)=0$. The difference
$$W(t)=\bar{U}(t)-V(t)=(w(t),\partial_t w(t), \xi^t)$$
fulfills the problem
$$
\begin{cases}
\partial_{tt} w+Aw
+\displaystyle\int_0^\infty \mu(s) A\xi(s)\d s +f(w)=
-[f(u_\eps)-f(u_0)],\\
\xi_t=T\xi+\pt w,
\end{cases}
$$
with initial conditions $W(\tau)=0$.
By the usual multiplications, we get
$$
\frac{\d}{\d t}\|W\|_\H^2
\leq \|\pt w\|^{2}+c\|f(u_{\eps})-f(u_{0})\|^{2}.
$$
Exploiting \eqref{crescita} and \eqref{s4r9a}, we readily obtain
$$
\|f(u_{\eps})-f(u_{0})\| \leq c\| \bar{u}\|_{1}
\leq c \|w\|_{1}+c\|v\|_{1}.
$$
On the other hand, we know from~\eqref{vpicpic} that (note that $\vartheta\geq 1$)
$$
\|V(t)\|_{\H}\leq c\eps ^{1-\rho},\quad\forall t\geq \tau.
$$
Combining the estimates, we end up with
$$
\ddt \|W\|_{\H}^2
\leq c\|W\|_{\H}^2
+c\eps^{2(1-\rho)},
$$
and the Gronwall lemma yields
$$
\|W(t)\|_{\H}^2\leq \eps^{2(1-\rho )}
c\e^{c(t-\tau )},\quad\forall t\geq \tau.
$$
The desired conclusion follows then by comparison.
\end{proof}

\begin{proof}[Proof of Theorem \ref{s4the1}]
For $\eps>0$, let $U_{\eps }$ be an arbitrary element of $\A^\eps$.
Then $U_\eps=U_\eps(0)$ for some
{\cbt} $U_\eps(t)$ of
$S_{\hat g^\eps}(t,\tau)$.
By applying Lemma~\ref{s4pro1} with $t=0$,
$$
\|U_\eps
-S_{\hat{g}^{0}}(0,\tau)U_\eps(\tau)\| _{\H}
\leq c\eps ^{1-\rho}\e^{-c\tau},\quad\forall \tau\leq 0.
$$
At the same time, in the light of Remark~\ref{miii}, the set $\A^{0}$
attracts uniformly not only
with respect to $\tau \in \R$, but also with respect to
$\hat{g}^{0}\in \textsf{H}(g^{0})$.
Thus, setting $\nu>0$ arbitrarily small, and recalling~\eqref{s4r9a},
we find $\tau=\tau(\nu)\leq 0$ independent of $\eps$
such that
$$
\mathrm{dist}_{\H}\big(S_{\hat{g}^{0}}(0,\tau)U_\eps(\tau),\A^{0}\big)\leq\nu.
$$
Exploiting the triangle inequality we arrive at
$$\mathrm{dist}_{\H}\big(U_\eps,\A^{0}\big)
\leq c\eps ^{1-\rho}\e^{-c\tau}+\nu,
$$
and by arbitrariness of $U_\eps\in\A^\eps$ we reach the conclusion
$$\mathrm{dist}_{\H}\big(\A^\eps,\A^{0}\big)
\leq c\eps ^{1-\rho}\e^{-c\tau}+\nu.
$$
Accordingly,
$$\limsup_{\eps\to 0}\,\mathrm{dist}_{\H}\big(\A^\eps,\A^{0}\big)
\leq \nu.
$$
A final limit $\nu\to 0$ completes the argument.
\end{proof}
%%%%%%%%%%%%%%%%%%%%%%%%%%%%%%%%%%%%%%%%%%%%

%%%%%%%%%%%%%%%%%%%%%%%%%%%%%%%%%%%%%%%%%%%%

%%%%%%%%%%%%%%%%%%%%%%%%%%%%%%%%%%%%%%%%%%%%

\end{document}